\let\latexput\put
\let\put\latexput
  \newfont\fiverm{cmr5} 
\newtheorem{thm}{Theorem}[section]
\newtheorem{lem}[thm]{Lemma}
\newtheorem{thm-con}[thm]{Theorem-Conjecture}
\numberwithin{equation}{section}
\theoremstyle{definition}
\newcommand{\f}{\Bbb F}
\begin{document}

\title[Number of Equivalence Classes of Rational Functions]{Number of Equivalence Classes of Rational Functions over Finite Fields}

\author[Xiang-dong Hou]{Xiang-dong Hou}
\address{Department of Mathematics and Statistics,
University of South Florida, Tampa, FL 33620}
\email{xhou@usf.edu}

\keywords{finite field, general linear group, projective linear group, rational function}

\subjclass[2010]{05E18, 11T06, 12E20, 12F20, 20G40}

\begin{abstract}
Two rational functions $f,g\in\Bbb F_q(X)$ are said to be {\em equivalent} if there exist $\phi,\psi\in\Bbb F_q(X)$ of degree one such that $g=\phi\circ f\circ\psi$. We give an explicit formula for the number of equivalence classes of rational functions of a given degree in $\Bbb F_q(X)$. This result should provide guidance for the current and future work on classifications of low degree rational functions over finite fields. We also determine the number of equivalence classes of polynomials of a given degree in $\Bbb F_q[X]$.
\end{abstract}

\maketitle

\section{Introduction}

For a nonconstant rational function $f(X)$ over a field $\f$, written in the form $f(X)=P(X)/Q(X)$, where $P,Q\in\f[X]$, $Q\ne 0$, and $\text{gcd}(P,Q)=1$, we define $\deg f=\max\{\deg P,\deg Q\}$. Then $[\f(X):\f(f)]=\deg f$. By L\"uroth theorem, every subfield $E\subset\f(X)$ with $[\f(X):E]=d$ is of the form $\f(f)$ for some $f\in\f(X)$ with $\deg f=d$. Let 
\begin{equation}\label{G(F)}
G(\f)=\{\phi\in\f(X):\deg \phi=1\}.
\end{equation}
The group $(G(\f),\circ)$ is isomorphic to the projective linear group $\text{PGL}(2,\f)$ and the Galois group $\text{Aut}(\f(X)/\f)$ of $\f(X)$ over $\f$. For $A=\left[\begin{smallmatrix}a&b\cr c&d\end{smallmatrix}\right]\in\text{PGL}(2,\f)$, its corresponding element in $G(\f)$, denoted by $\phi_A$, is $\phi_A=(aX+b)/(cX+d)$. For $\phi\in G(\f)$, its corresponding element in $\text{Aut}(\f(X)/\f)$, denoted by $\sigma_\phi$, is the $\f$-automorphism of $\f(X)$ defined by $\sigma_\phi(X)=\phi(X)$.

Two rational functions $f,g\in\f(X)\setminus\f$ are said to be {\em equivalent}, denoted as $f\sim g$, if there exist $\phi,\psi\in G(\f)$ such that $g=\phi\circ f\circ \psi$. This happens if and only if $\f(g)=\sigma(\f(f))$ for some $\sigma\in\text{Aut}(\f(X)/\f)$. 

The set $\f(X)\setminus \f$ equipped with composition $\circ$ is a monoid and $G(\f)$ is the group of units of $(\f(X)\setminus\f,\circ)$. In a parallel setting, one replaces $\f(X)$ with $\f[X]$ and $G(\f)$ with the affine linear group $\text{AGL}(1,\f)=\{\phi\in\f[X]:\deg\phi=1\}$. Then $(\f[X]\setminus\f,\circ)$ is a submonoid of $(\f(X)\setminus\f,\circ)$ and $\text{AGL}(1,\f)$ is its group of units. If two polynomials $f,g\in\f[X]\setminus\f$ are equivalent as rational functions, i.e., $g=\phi\circ f\circ \psi$ for some $\phi,\psi\in G(\f)$, then there are $\alpha,\beta\in\text{AGL}(1,\f)$ such that $g=\alpha\circ f\circ\beta$; see Lemma~\ref{L8.1}. Factorizations in the monoids $(\f(X)\setminus\f,\circ)$ and $(\f[X]\setminus\f,\circ)$ are difficult questions that have attracted much attention \cite{Alonso-Gutierrez-Recio-JSC-1995, Ayad-Fleischmann-JSC-2008, Barton-Zippel-JSC-1985, Fuchs-Petho-PAMS-2011, Fuchs-Zannier-JEMS-2012, vonzurGathen-Weiss-JSC-1995, Zippel-ISSAC-1991}. Factorizations in $(\f(X)\setminus\f,\circ)$ are determined by the lattice $\mathcal L(\f)$ of the subfields of $\f(X)$ and vice versa. The Galois group $\text{Aut}(\f(X)/\f)$ is an automorphism group of $\mathcal L(\f)$ and the $\text{Aut}(\f(X)/\f)$-orbits in $\mathcal L(\f)$ correspond to the equivalence classes in $\f(X)\setminus\f$.

Many intrinsic properties of rational functions are preserved under equivalence. The degree of a rational function in $\f(X)\setminus\f$ is invariant under equivalence. If $f_1,f_2\in\f(X)\setminus\f$ are such that $f_2=\phi\circ f_1\circ\psi$ for some $\phi,\psi\in G(\f)$, then it is clear that $\sigma_\psi\in\text{Aut}(\f(X)/\f)$ resticts to an isomorphism $\tau:\f(f_1)\to\f(f_2)$ and extends to an isomorphism $\theta:N_1\to N_2$, where $N_i$ is the normal closure of $\f(X)$ over $\f(f_i)$.
\[
\begin{tikzpicture}[xscale=1, yscale=0.8]

\node at (0,0) {$\f$};
\node at (2,0) {$\f$};
\node at (0,2) {$\f(f_1)$};
\node at (2,2) {$\f(f_2)$};
\node at (0,4) {$\f(X)$};
\node at (2,4) {$\f(X)$};
\node at (0,6) {$N_1$};
\node at (2,6) {$N_2$};

\draw (0,0.5) -- (0,1.5);
\draw (2,0.5) -- (2,1.5);
\draw (0,2.5) -- (0,3.5);
\draw (2,2.5) -- (2,3.5);
\draw (0,4.5) -- (0,5.5);
\draw (2,4.5) -- (2,5.5);

\draw[->] (0.6,0) to (1.4,0);
\draw[->] (0.6,2) to (1.4,2);
\draw[->] (0.6,4) to (1.4,4);
\draw[->] (0.6,6) to (1.4,6);

\node [above] at (1,0) {$\scriptstyle \text{id}$};
\node [above] at (1,2) {$\scriptstyle \tau$};
\node [above] at (1,4) {$\scriptstyle \sigma_\psi$};
\node [above] at (1,6) {$\scriptstyle \theta$};

\end{tikzpicture}
\]
More interesting invariants of rational functions arise from this setting. For example, the number of ramification points of $f_1$ with a given ramification index, i.e., the number of degree one places of the function field $\f(X)$ with a given ramification index over $\f(f_1)$, equals that of $f_2$. (For the definition of the ramification index of a place in an extension of a function field, see \cite[III.1]{Stichtenoth-1993}.) The Galois group $\text{Aut}(N_1/\f(f_1))$, which is the arithmetic monodromy group of $f_1$ (\cite{Cohen-Matthews-TAMS-1994}), is isomorphic to that of $f_2$.
It is known that the equivalence classes of rational functions $f\in\f_q(X)\setminus\f_q$ such that $\f_q(X)/\f_q(f)$ is Galois are in one-to-one correspondence with the classes of conjugate subgroups of $\text{PGL}(2,\f_q)$; see \cite{Hou-CA-2020}.

When $\f=\f_q$, the finite field with $q$ elements, there is another important invariant: $|f(\Bbb P^1(\f_q))|$, the number of values of $f\in\f_q(X)$ on the projective line $\Bbb P^1(\f_q)$. In the theory and applications of finite fields, an important question is to understand the polynomials that permute $\f_q$ and the rational functions that permute $\Bbb P^1(\f_q)$ under the aforementioned equivalence. For classifications of low degree permutation polynomials of finite fields, see \cite{Dickson-AM-1896, Fan-arXiv1812.02080, Fan-arXiv1903.10309, Li-Chandler-Xiang-FFA-2010, Shallue-Wanless-FFA-2013}. Permutation rational functions of $\Bbb P^1(\f_q)$ of degree 3 and 4 were classified recently \cite{Ding-Zieve-arXiv2010.15657, Ferraguti-Micheli-DCC-2020, Hou-CA-2021}.

Equivalence of rational functions over finite fields also arises in other circumstances. Stichtenoth and Topuzo\u glu \cite{Stichtenoth-Topuzoglu-FFA-2012} introduced a natural action of $\text{PGL}(2,\f_q)$ on $\f_q[X]$ and studied the irreducible polynomials that are fixed by an element $A=\left[\begin{smallmatrix}a&b\cr c&d\end{smallmatrix}\right]\in\text{PGL}(2,\f_q)$; such polynomials are precisely the irreducible factors of $bX^{q^r+1}-aX^{q^r}+dX-c$ for some $r\ge 0$. Reis \cite{Reis-JPAA-2020} showed that the $A$-inavriant irreducible polynomials  are of the form $h^{\deg f}f(g/h)$, where $f\in\f_q[X]$ is irreducible and $g/h\in\f_q(X)$ is a rational function of degree $o(A)$ (the order of $A$ in $\text{PGL}(2,\f_q)$) in the lowest term. Moreover, the rational function $g/h$ is explicitly determined by $A$. Such $A$-invariant irreducible polynomials were enumerated in \cite{Reis-FFA-2020}. There are numerous papers on irreducible polynomials of the form $h^{\deg f}f(g/h)$ \cite{Ahmadi-FFA-2011, Mattarei-Pizzato-FFA-2017, Mattarei-Pizzato-FFA-2022, Panario-Reis-Wang-JPAA-2020}. Mattarei and Pizzato \cite{Mattarei-Pizzato-FFA-2017} showed that the number of irreducible polynomials of a given degree of the form $h^{\deg f}f(g/h)$ depends only on the equivalence class of the rational function $g/h$ .

We remind the reader that there is a fundamental difference between the PGL action defined in \cite{Stichtenoth-Topuzoglu-FFA-2012} and the PGL action considered in the present paper; the former acts on polynomials while the latter acts on rational functions. As such, our approach to the enumeration of the fixed points of the PGL action is substantially different from that in \cite{Reis-FFA-2020}.

When $\f=\f_q$, there are only finitely many equivalence classes of rational functions in $\f_q(X)\setminus\f_q$ with a given degree $n$. We shall denote this number by $\frak N(q,n)$. Despite its obvious significance, this number was not known previously.  The main contribution of the present paper is the determination of $\frak N(q,n)$ for all $q$ and $n$ (Theorem~\ref{T-main}). For example, when $n=3$, we have
\[
\frak N(q,3)=
\begin{cases}
2(q+1)&\text{if}\ q\equiv 1,4\pmod 6,\cr
2q&\text{if}\ q\equiv 2,5\pmod 6,\cr
2q+1&\text{if}\ q\equiv 3\pmod 6.
\end{cases}
\]

The classification of rational functions of degree $n\le 2$ over $\f_q$ is straightforward; see Sections~7.1 and 7.2. When $n=3$ and $q$ is even, the classification was obtained recently by Mattarei and Pizzato \cite{Mattarei-Pizzato-arXiv2104.00111} using the fact that such rational functions have at most two ramification points. The case $n=3$ and $q$ odd is still unsolved. (In this case, it was shown in \cite{Mattarei-Pizzato-arXiv2104.00111} that $\frak N(q,3)\le 4q$.)
A complete classification of rational functions over $\f_q$ appears to be out of reach. However, the determination of $\frak N(q,n)$ is an important step towards understanding the equivalence classes of rational functions over finite fields, especially those with low degree.

Here is the outline of our approach. There is an action of $\text{GL}(2,\f_q)$ on the set of subfields $F\subset\f_q(X)$ with $[\f_q(X):F]=n$, and $\frak N(q,n)$ is the number of orbits of this action. To compute $\frak N(q,n)$ by Burnside's lemma, it suffices to determine the number of such subfields of $\f_q(X)$ fixed by each member $A$ of $\text{GL}(2,\f_q)$. From there on, the computation becomes quite technical and depends on the canonical form of $A$.

The paper is organized as follows: In Section~2, we include some preliminaries and lay out the plan for computing $\frak N(q,n)$. In Sections 3 -- 5, we compute the number of subfields of $\f_q(X)$ of degree $n$ fixed by $A\in\text{GL}(2,\f_q)$ according to the canonical form of $A$. The case where the elementary divisor of $A$ is an irreducible quadratic, covered in Section~4, is significantly more difficult than the other cases. The explicit formula for $\frak N(q,n)$ is presented in Section~6. A discussion of low degree rational functions over $\f_q$ ensued in Section~7. The last section is devoted to equivalence classes of polynomials over finite fields. The situation is much simpler compared with the case of rational functions. The number of equivalence classes are computed and, as concrete examples, polynomials of degree up to 5 are classified. Several counting lemmas used in the paper are gathered in the appendix.

\section{Preliminaries}

\subsection{Rational functions and subfields}\

Let 
\begin{equation}\label{Rn}
\mathcal R_{q,n}=\{f\in\f_q(X):\deg f=n\}.
\end{equation}
By Lemma~A\ref{LA2},
\[
|\mathcal R_{q,n}|=\begin{cases}
q-1&\text{if}\ n=0,\cr
q^{2n-1}(q^2-1)&\text{if}\ n>0.
\end{cases}
\]
For $f_1,f_2\in\f_q(X)\setminus\f_q$, we define $f_1\sim f_2$ if $f_2=\phi\circ f_1\circ \psi$ for some $\phi,\psi\in G(\f_q)$ and we define $f_1\overset{\rm L}\sim f_2$ if there exists $\phi\in G(\f_q)$ such that $f_2=\phi\circ f_1$. It is clear that 
\[
f_1\overset{\rm L}\sim f_2 \ \Leftrightarrow\ \f_q(f_1)=\f_q(f_2)
\]
and
\[
f_1\sim f_2\ \Leftrightarrow\ \f_q(f_2)=\sigma(\f_q(f_1))\ \text{\rm for some}\ \sigma\in\text{Aut}(\f_q(X)/\f_q).
\]
Recall that $\frak N(q,n)$ denotes the number of $\sim$ equivalence classes in $\mathcal R_{q,n}$; this number is the main subject of our investigation.

For $f=P/Q\in\f_q(X)\setminus\f_q$, where $P,Q\in\f_q[X]$, $\text{gcd}(P,Q)=1$, let 
\[
\mathcal S(f)=\langle P,Q\rangle_{\f_q}=\{aP+bQ:a,b\in\f_q\},
\]
the $\f_q$-span of $\{P,Q\}$. (Throughout this paper, an $\f_q$-span is denoted by $\langle\ \rangle_{\f_q}$.) Then $f_1\overset{\rm L}\sim f_2$ $\Leftrightarrow$ $\mathcal S(f_1)=\mathcal S(f_2)$. By L\"uroth theorem, every subfield $F\subset\f_q(X)$ with $[\f_q(X):F]=n<\infty$ is of the form $F=\f_q(f)$, where $f\in\f_q(X)$ is of degree $n$. The number of such $F$ is
\[
\frac{|\mathcal R_{q,n}|}{|G(\f_q)|}=\frac{q^{2n-1}(q^2-1)}{q(q^2-1)}=q^{2(n-1)}.
\]
Denote the set of these fields by $\mathcal F_n=\{F_1,\dots,F_{q^{2(n-1)}}\}$ (Figure~\ref{F1}) and let $\text{Aut}(\f_q(X)/\f_q)$ act on $\mathcal F_n$. Then $\frak N(q,n)$ is precisely the number of orbits of this action.

\begin{figure}
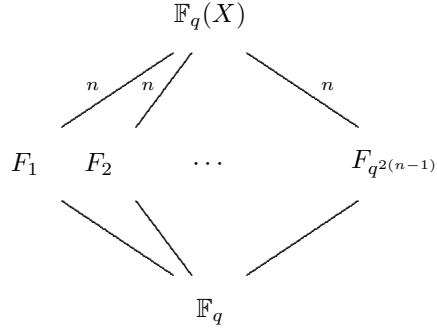

\[
\beginpicture
\setcoordinatesystem units <1.4em,1.4em> point at 0 0

\plot -1 -3  -4 -1 /
\plot 1 -3  4 -1 /
\plot -1 3  -4 1 /
\plot 1 3  4 1 /
\plot -0.5 -3  -2 -1 /
\plot -0.5 3  -2 1 /

\put {$\f_q$}  at 0 -4
\put {$\f_q(X)$}  at 0 4
\put {$F_1$}  at -5 0
\put {$F_2$}  at -3 0
\put {$F_{q^{2(n-1)}}$}  at 5 0
\put {$\cdots$} at 0 0
\put {$\scriptstyle n$} [br] at -3 2
\put {$\scriptstyle n$} [br] at -1.5 2
\put {$\scriptstyle n$} [bl] at 3 2
\endpicture
\]
\caption{Subfields of $\f_q(X)$ of degree $n$}\label{F1}
\end{figure}


\subsection{Conjugacy classes of $\text{GL}\boldsymbol{(2,\f_q)}$}\

Let 
\[
A_a=\left[\begin{matrix}a&0\cr 0&a\end{matrix}\right],\quad a\in\f_q^*,
\]
\[
A_{\{a,b\}}=\left[\begin{matrix}a&0\cr 0&b\end{matrix}\right],\quad a,b\in\f_q^*,
\]
\[
A_{\{\alpha,\alpha^q\}}=\left[\begin{matrix}\alpha+\alpha^q&-\alpha^{1+q}\cr 1&0\end{matrix}\right],\quad \alpha\in\f_{q^2}\setminus\f_q,
\]
\[
B_a=\left[\begin{matrix}a&a\cr 0&a\end{matrix}\right],\quad a\in\f_q^*.
\]
Then
\begin{align}\label{conj-cls}
\mathcal C:=\,&\{A_a:a\in\f_q^*\}\cup\{A_{\{a,b\}}:a,b\in\f_q^*,\ a\ne b\}\\
&\cup\{A_{\{\alpha,\alpha^q\}}:\alpha\in\f_q^2\setminus\f_q\}\cup\{B_a:a\in\f_q^*\}\nonumber
\end{align}
forms a set of representatives of the conjugacy classes of $\text{GL}(2,\f_q)$. Additional information about these representatives is given in Table~\ref{Tb1}, where $\text{cent}(A)$ denotes the centralizer of $A$ in $\text{GL}(2,\f_q)$; see \cite[\S 6.3]{Hou-ams-gsm-2018}.

\begin{table}[ht]
\caption{Conjugacy classes of $\text{GL}(2,\f_q)$}\label{Tb1}
   \renewcommand*{\arraystretch}{1.2}
    \centering
     \begin{tabular}{|c|c|c|}
         \hline
         $A\in\mathcal C$ & elementary divisors & $|\text{cent}(A)|$\\ \hline
         $A_a,\ a\in\f_q^*$ & $X-a,\ X-a$ &$q(q-1)^2(q+1)$ \\ \hline
         $A_{\{a,b\}},\ a,b\in\f_q^*,\ a\ne b$ & $X-a,\ X-b$ & $(q-1)^2$ \\ \hline
         $A_{\{\alpha,\alpha^q\}},\ \alpha\in\f_q^2\setminus\f_q$ & $(X-\alpha)(X-\alpha^q)$ & $q^2-1$ \\ \hline
         $B_a,\ a\in\f_q^*$ & $(X-a)^2$ & $q(q-1)$ \\ \hline
    \end{tabular}
\end{table}

\subsection{Burnside's lemma}\

Let $\text{GL}(n,\f_q)$ act on $\mathcal F_n$ as follows: For $A=\left[\begin{smallmatrix}a&b\cr c&d\end{smallmatrix}\right]\in\text{GL}(n,\f_q)$ and $\f_q(f)\in\mathcal F_n$, where $f\in\f_q(X)$ is of degree $n$, $A(\f_q(f))=\f_q(f\circ\phi_A)$, where $\phi_A=(aX+b)/(cX+d)$.
By Burnside's lemma,
\begin{align}\label{B-lem}
\frak N(q,n)=\,&\sum_{A\in\mathcal C}\frac{\text{Fix}(A)}{|\text{cent}(A)|}\\
=\,&\frac 1{q(q-1)^2(q+1)}\sum_{a\in\f_q^*}\text{Fix}(A_a)+\frac 1{(q-1)^2}\sum_{\{a,b\}\subset\f_q^*,\,a\ne b}\text{Fix}(A_{\{a,b\}})\cr
&+\frac 1{q^2-1}\sum_{\{\alpha,\alpha^q\}\subset\f_{q^2}\setminus\f_q}\text{Fix}(A_{\{\alpha,\alpha^q\}})+\frac 1{q(q-1)}\sum_{a\in\f_q^*}\text{Fix}(B_a),\nonumber
\end{align}
where
\[
\text{Fix}(A)=|\{F\in\mathcal F_n:A(F)=F\}|.
\]
Obviously, 
\begin{equation}\label{fixAa}
\text{Fix}(A_a)=|\mathcal F_n|=q^{2(n-1)}.
\end{equation}
We will determine $\text{Fix}(A_{\{a,b\}})$, $\text{Fix}(A_{\{\alpha,\alpha^q\}})$, and $\text{Fix}(B_a)$ in the subsequent sections; in doing so, we will need a number of counting lemmas which are given in the appendix.


\section{Determination of $\text{Fix}(A_{\{a,b\}})$}

Let $a,b\in\f_q^*$, $a\ne b$ and $c=a/b$. Then $\phi_{A_{\{a,b\}}}=cX$. Therefore, a field $\f_q(f)$, where $f\in\f_q(X)\setminus\f_q$, is fixed by $A_{\{a,b\}}$ if and only if $\f_q(f(X))=\f_q(f(cX))$.

\begin{lem}\label{L2.0} Let $f\in\f_q(X)$ with $\deg f=n>0$ and $1\ne c\in\f_q^*$ with $o(c)=d$, where $o(c)$ denotes the multiplicative order of $c$. Then $\f_q(f(X))=\f_q(f(cX))$ if and only if
\[
\mathcal S(f)=\langle X^{r_1}P_1(X^d),\,X^{r_2}Q_1(X^d)\rangle_{\f_q},
\]
where $P_1,Q_1\in\f_q[X]$ are monic, $0\le r_1,r_2<d$, $\deg(X^{r_2}Q_1(X^d))<\deg(X^{r_1}P_1(X^d))=n$, and $\text{\rm gcd}(X^{r_1}P_1,\,X^{r_2}Q_1)=1$.
\end{lem}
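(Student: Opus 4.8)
The plan is to translate the field equality into an invariance condition for a single $\f_q$-linear operator and then to characterize the $\tau$-invariant $2$-dimensional subspaces. Write $f=P/Q$ in lowest terms; since $f$ is nonconstant, $P,Q$ are $\f_q$-linearly independent, so $V:=\mathcal S(f)=\langle P,Q\rangle_{\f_q}$ is $2$-dimensional. By the equivalences recorded in Subsection~2.1, $\f_q(f(X))=\f_q(f(cX))$ holds iff $f(X)\overset{\rm L}\sim f(cX)$ iff $\mathcal S(f(X))=\mathcal S(f(cX))$. Because $X\mapsto cX$ is an $\f_q$-algebra automorphism of $\f_q[X]$ (as $c\ne 0$), it preserves coprimality, so $\mathcal S(f(cX))=\langle P(cX),Q(cX)\rangle_{\f_q}$. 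Introducing the $\f_q$-linear operator $\tau$ on $\f_q[X]$ with $\tau(g(X))=g(cX)$, this says $\mathcal S(f(cX))=\tau(V)$. Hence the field equality is equivalent to $\tau(V)=V$, i.e. to $V$ being $\tau$-invariant.

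Next I would analyze $\tau$ directly. On the monomial basis $\tau(X^k)=c^kX^k$, so $\tau$ is diagonalizable over $\f_q$ with eigenvalues $c^0,c^1,\dots,c^{d-1}$, which are distinct because $o(c)=d$; the eigenspace for $c^r$ is $E_r=X^r\f_q[X^d]$, and $\f_q[X]=\bigoplus_{r=0}^{d-1}E_r$. Since $\tau$ is annihilated by the separable polynomial $\prod_{r=0}^{d-1}(T-c^r)$, it is semisimple, and every finite-dimensional $\tau$-invariant subspace is the direct sum of its intersections with the $E_r$. This yields the reverse implication at once: if $V=\langle X^{r_1}P_1(X^d),\,X^{r_2}Q_1(X^d)\rangle_{\f_q}$, then each generator lies in some $E_r$ and is therefore an eigenvector of $\tau$, so $V$ is $\tau$-invariant and the field equality holds.

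For the forward implication I would use $\dim V=2$ to split into two cases. If $V$ meets only one eigenspace, then $V\subseteq E_{r_0}$; but $r_0\ge 1$ would force $X$ to divide every element of $V$, contradicting $\gcd(P,Q)=1$, so $r_0=0$ and $V\subseteq\f_q[X^d]$, where an echelon basis (monic, of distinct degrees) gives the claimed form with $r_1=r_2=0$. Otherwise $V=(V\cap E_{r_1})\oplus(V\cap E_{r_2})$ with $r_1\ne r_2$ and each summand one-dimensional, spanned respectively by monic representatives $X^{r_1}P_1(X^d)$ and $X^{r_2}Q_1(X^d)$; here the two generator degrees are incongruent modulo $d$ (since $r_1\not\equiv r_2$), hence unequal, so I can order them to make $\deg(X^{r_2}Q_1(X^d))<\deg(X^{r_1}P_1(X^d))$. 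Two bookkeeping facts close the argument in both cases: first, for any spanning pair of a $2$-dimensional subspace the gcd equals the gcd of all its elements, hence is basis-independent and equal to $\gcd(P,Q)=1$, giving the coprimality condition; second, passing to a coprime basis replaces $f$ only by an $\overset{\rm L}\sim$-equivalent function, under which $\deg$ is invariant, so the larger generator degree equals $n$.

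The main obstacle I anticipate is not the central idea but the careful bookkeeping in the forward direction: ruling out $r_0\ge 1$ via coprimality, verifying that the two generator degrees cannot coincide in the two-eigenspace case (so the strict inequality can be arranged), and confirming that the maximal generator degree is exactly $n$. Once the operator $\tau$ and its eigenspace decomposition $\f_q[X]=\bigoplus_{r}X^r\f_q[X^d]$ are in place, these steps are routine, but they must be executed to match the precise normalization in the statement.
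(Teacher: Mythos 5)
Your proposal is correct: the reduction of $\f_q(f(X))=\f_q(f(cX))$ to $\tau$-invariance of the two-dimensional space $V=\mathcal S(f)$ is sound, the operator $\tau(g(X))=g(cX)$ is indeed annihilated by $T^d-1=\prod_{r=0}^{d-1}(T-c^r)$, which is separable and split over $\f_q$ since $o(c)=d\mid q-1$, and the standard fact that an invariant subspace of a semisimple operator splits as $V=\bigoplus_r(V\cap E_r)$ with $E_r=X^r\f_q[X^d]$ carries the forward direction; your two bookkeeping devices (basis-independence of the gcd over a $2$-dimensional span, and degree invariance under $\overset{\rm L}\sim$) correctly deliver the coprimality and the normalization $\deg(X^{r_2}Q_1(X^d))<\deg(X^{r_1}P_1(X^d))=n$. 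The paper reaches the same conclusion by a different mechanism: it fixes the \emph{unique} reduced basis of $\mathcal S(f)$ (both $P,Q$ monic, $\deg P=n>\deg Q=m$, coefficient of $X^m$ in $P$ equal to $0$), observes that $(c^{-n}P(cX),\,c^{-m}Q(cX))$ is again a basis in this canonical form for the same subspace, and concludes $P(X)=c^{-n}P(cX)$, $Q(X)=c^{-m}Q(cX)$ by uniqueness, whence the coefficient supports of $P$ and $Q$ are constrained to single residue classes mod $d$ and the structured form drops out at once — no case analysis on how $V$ meets the eigenspaces, and no appeal to semisimplicity. In effect the paper shows the canonical basis vectors are themselves $\tau$-eigenvectors by a one-line coefficient comparison, which is shorter and entirely elementary; your eigenspace route costs the extra case split (one eigenspace versus two) and the two bookkeeping lemmas, but it buys conceptual clarity, explains \emph{why} an eigenvector basis must exist, and generalizes immediately to any diagonalizable substitution acting on $\mathcal S(f)$ — indeed the same decomposition silently underlies the paper's later use of the lemma over $\f_{q^2}$ in Section~4. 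One pedantic point to tidy in a final write-up: the lemma states $\gcd(X^{r_1}P_1,\,X^{r_2}Q_1)=1$ for the uncomposed polynomials, while your argument yields $\gcd\bigl(X^{r_1}P_1(X^d),\,X^{r_2}Q_1(X^d)\bigr)=1$; the two are equivalent (an irreducible common divisor would have to be $X$, governed by $r_1,r_2$ and $P_1(0),Q_1(0)$, or a common divisor of $P_1,Q_1$ pulled through $X\mapsto X^d$), and the paper glosses this step in exactly the same way, but it deserves a sentence.
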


\begin{proof}
($\Leftarrow$) Obvious.

\medskip
($\Rightarrow$) We may assume that $f=P/Q$, where $P,Q\in\f_q[X]$ are monic, $\deg P=n$, $\deg Q=m<n$, $\text{gcd}(P,Q)=1$, and the coefficient of $X^m$ in $P$ is 0. Let $n\equiv r_1\pmod d$ and $m\equiv r_2\pmod d$, where $0\le r_1,r_2<d$. Such a pair $(P,Q)$ is uniquely determined by $\mathcal S(f)$. Since
\[
\langle P(X),Q(X)\rangle_{\f_q}=\mathcal S(f)=\mathcal S(f(cX))=\langle c^{-n}P(cX),c^{-m}Q(cX)\rangle_{\f_q},
\]
we have
\[
P(X)=c^{-n}P(cX),\quad Q(X)=c^{-m}Q(cX).
\]
Thus the coefficient of $X^i$ in $P(X)$ is 0 for all $i$ with $i\not\equiv n\pmod d$, whence $P(X)=x^{r_1}P_1(X^d)$. In the same way, $Q(X)=X^{r_2}Q_1(X^d)$. Since $\text{gcd}(P,Q)=1$, we have $\text{gcd}(X^{r_1}P_1,X^{r_2}Q_1)=1$.
\end{proof}

In Lemma~\ref{L2.0}, let $m=\deg(X^{r_2}Q_1(X^d))$. Note that $\text{\rm gcd}(X^{r_1}P_1,\,X^{r_2}Q_1)=1$ if and only if $\text{gcd}(P_1,Q_1)=1$ plus one of the following: (i) $r_1=r_2=0$; (ii) $r_1=0$, $r_2>0$, $P_1(0)\ne 0$; (iii) $r_1>0$, $r_2=0$, $Q_1(0)\ne 0$. When $r_1=r_2=0$, i.e., $n\equiv m\equiv 0\pmod d$, the number of the fields $\f_q(f)$ in Lemma~\ref{L2.0} fixed by $A_{\{a,b\}}$ is $q^{-1}\alpha_{m/d,n/d}$, where
\[
\alpha_{i,j}=|\{(f,g):f,g\in\f_q[X]\ \text{monic},\ \deg f=i,\ \deg g=j,\ \text{gcd}(f,g)=1\}|.
\]
When $r_1=0$ and $r_2>0$, i.e., $n\equiv 0\pmod d$ but $m\not\equiv 0\pmod d$, the number of $\f_q(f)$ fixed by $A_{\{a,b\}}$ is $\beta_{n/d,\lfloor m/d\rfloor}$, where
\[
\beta_{i,j}=|\{(f,g):f,g\in\f_q[X]\ \text{monic},\ \deg f=i,\ \deg g=j,\ f(0)\ne0,\ \text{gcd}(f,g)=1\}|.
\]
When $r_1>0$ and $r_2=0$, i.e., $m\equiv 0\pmod d$ but $n\not\equiv 0\pmod d$, the number of $\f_q(f)$ fixed by $A_{\{a,b\}}$ is $\beta_{m/d,\lfloor n/d\rfloor}$.

Define
\[
\alpha_j=|\{(f,g):f,g\in\f_q[X]\ \text{monic},\ \deg f<j,\ \deg g=j,\ \text{gcd}(f,g)=1\}|=\sum_{0\le i\le j}\alpha_{i,j}.
\]
The numbers $\alpha_{i,j}$, $\alpha_j$ and $\beta_{i,j}$ are determined in Appendix, Lemmas~A\ref{LA1} and A\ref{LA3}.

\begin{thm}\label{T2.1}
Let $a,b\in\f_q^*$, $a\ne b$, and $d=o(a/b)$. Then
\[
\text{\rm Fix}(A_{\{a,b\}})=\begin{cases}
\displaystyle q^{2n/d-2}+\frac{(d-1)(q^{2n/d}-1)}{q+1}&\text{if}\ n\equiv 0\pmod d,\vspace{0.5em}\cr
\displaystyle \frac{q^{2\lfloor n/d\rfloor+1}+1}{q+1}&\text{if}\ n\not\equiv 0\pmod d.
\end{cases}
\]
\end{thm}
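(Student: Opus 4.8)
The plan is to translate the count $\text{Fix}(A_{\{a,b\}})$ into a sum of the combinatorial quantities $\alpha_{i,j}$, $\alpha_j$ and $\beta_{i,j}$, and then to evaluate that sum in closed form. By the discussion following Lemma~\ref{L2.0}, a field $\f_q(f)$ fixed by $A_{\{a,b\}}$ is recorded by its normalized representative $\mathcal S(f)=\langle X^{r_1}P_1(X^d),X^{r_2}Q_1(X^d)\rangle_{\f_q}$ with $\deg(X^{r_1}P_1(X^d))=n$ and $\deg(X^{r_2}Q_1(X^d))=m<n$, where the admissible pairs $(r_1,r_2)$ are precisely (i) $r_1=r_2=0$, (ii) $r_1=0<r_2$, and (iii) $r_2=0<r_1$. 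Since $r_1\equiv n$ and $r_2\equiv m\pmod d$, which case occurs is dictated by the residues of $n$ and $m$ modulo $d$, and the number of fixed fields with $\deg Q=m$ is, respectively, $q^{-1}\alpha_{m/d,\,n/d}$, $\beta_{n/d,\,\lfloor m/d\rfloor}$, or $\beta_{m/d,\,\lfloor n/d\rfloor}$, as already recorded there. Thus everything reduces to summing these contributions over $m=0,1,\dots,n-1$.

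First I would treat the case $n\equiv 0\pmod d$, writing $N=n/d$. The values $m=dM$ with $0\le M\le N-1$ fall under (i) and contribute $q^{-1}\sum_{M=0}^{N-1}\alpha_{M,N}=q^{-1}\alpha_N$; the values $m\not\equiv0$ fall under (ii), and for each of the $d-1$ residues $r_2\in\{1,\dots,d-1\}$ the index $\lfloor m/d\rfloor$ runs over $0,\dots,N-1$, giving $(d-1)\sum_{j=0}^{N-1}\beta_{N,j}$. For $n\not\equiv0\pmod d$ the essential simplification is that $r_1>0$ forces $r_2=0$, since otherwise $X$ divides both generators and contradicts their coprimality; hence only case (iii) survives, and with $N=\lfloor n/d\rfloor$ the admissible $m=dM$ satisfy $0\le M\le N$, for a total of $\sum_{M=0}^{N}\beta_{M,N}$.

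It then remains to evaluate these sums via the closed forms of Lemmas~A\ref{LA1} and A\ref{LA3}. The $\alpha$-sum is immediate: $q^{-1}\alpha_N=q^{-1}q^{2N-1}=q^{2N-2}=q^{2n/d-2}$, which is exactly the first term of the asserted formula. The two $\beta$-sums carry the real work; substituting the explicit value of $\beta_{i,j}$ and collapsing the resulting geometric-type series should yield $\sum_{j=0}^{N-1}\beta_{N,j}=(q^{2N}-1)/(q+1)$ and $\sum_{M=0}^{N}\beta_{M,N}=(q^{2N+1}+1)/(q+1)$, which then combine with the $\alpha$-term to give the two branches of the statement. I expect the main obstacle to be exactly these $\beta$-summations: $\beta_{i,j}$ is not symmetric in its arguments (the constraint $f(0)\ne0$ breaks the symmetry enjoyed by $\alpha_{i,j}$), and the two sums range over different slots — the second index in one case, the first in the other — so each must be handled separately and the telescoping carried through with care. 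I would also check the degenerate boundaries ($N=0,1$ and the terms with $m=0$, where $Q$ is constant) to confirm that the closed forms persist there.
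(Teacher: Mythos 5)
Your proposal is correct and follows the paper's proof essentially step for step: the same reduction via Lemma~\ref{L2.0} and the case analysis of $(r_1,r_2)$ into the counts $q^{-1}\alpha_{m/d,\,n/d}$, $\beta_{n/d,\,\lfloor m/d\rfloor}$, $\beta_{m/d,\,\lfloor n/d\rfloor}$, the same two sums $q^{-1}\alpha_{N}+(d-1)\sum_{j=0}^{N-1}\beta_{N,j}$ for $d\mid n$ and $\sum_{M=0}^{N}\beta_{M,N}$ for $d\nmid n$ (your coprimality remark that $r_1>0$ forces $r_2=0$ is exactly the implicit reason only case (iii) survives), and the same evaluation via Lemma~A\ref{LA3}. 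The intermediate closed forms you predict, $\sum_{j=0}^{N-1}\beta_{N,j}=(q^{2N}-1)/(q+1)$ and $\sum_{M=0}^{N}\beta_{M,N}=(q^{2N+1}+1)/(q+1)$, are precisely what the paper's geometric-series computation produces, so nothing further is needed.
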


\begin{proof}
If $n\equiv 0\pmod d$, using Lemmas~A\ref{LA1} and A\ref{LA3}, we have
\begin{align*}
\text{\rm Fix}(A_{\{a,b\}})\,&=\sum_{\substack{0\le m<n\cr m\equiv0\,(\text{mod}\,d)}}q^{-1}\alpha_{m/d,n/d}+\sum_{\substack{0\le m<n\cr m\not\equiv0\,(\text{mod}\,d)}}\beta_{n/d,\lfloor m/d\rfloor}\cr
&=q^{-1}\sum_{0\le i<n/d}\alpha_{i,n/d}+\sum_{0\le i<n/d}(d-1)\beta_{n/d,i}\cr
&=q^{-1}\alpha_{n/d}+(d-1)\sum_{0\le i<n/d}q^{n/d-i-1}(q-1)\frac{q^{2i+1}+1}{q+1}\cr
&=q^{2n/d-2}+\frac{(d-1)(q-1)}{q+1}\sum_{0\le i<n/d}(q^{n/d}\cdot q^i+q^{n/d-1-i})\cr
&=q^{2n/d-2}+\frac{(d-1)(q-1)}{q+1}\Bigl( q^{n/d}\frac{q^{n/d}-1}{q-1}+\frac{q^{n/d}-1}{q-1}\Bigr)\cr
&=q^{2n/d-2}+\frac{(d-1)(q^{2n/d}-1)}{q+1}.
\end{align*}
If $n\not\equiv 0\pmod d$, we have
\begin{align*}
\text{\rm Fix}(A_{\{a,b\}})\,&=\sum_{\substack{0\le m<n\cr m\equiv0\,(\text{mod}\,d)}}\beta_{m/d,\lfloor n/d\rfloor}=\sum_{0\le i\le\lfloor n/d\rfloor}\beta_{i,\lfloor n/d\rfloor}\cr
&=q^{\lfloor n/d\rfloor}+\sum_{1\le i\le\lfloor n/d\rfloor}q^{\lfloor n/d\rfloor-i}(q-1)\frac{q^{2i}-1}{q+1}\kern2em\text{(by Lemma~A\ref{LA3})}\cr
&=q^{\lfloor n/d\rfloor}+\frac{q-1}{q+1}\sum_{1\le i\le\lfloor n/d\rfloor}(q^{\lfloor n/d\rfloor+1}\cdot q^{i-1}-q^{\lfloor n/d\rfloor-i})\cr
&=q^{\lfloor n/d\rfloor}+\frac{q-1}{q+1}\Bigl(q^{\lfloor n/d\rfloor+1}\,\frac{q^{\lfloor n/d\rfloor}-1}{q-1}-\frac{q^{\lfloor n/d\rfloor}-1}{q-1}\Bigr)\cr
&=q^{\lfloor n/d\rfloor}+\frac{(q^{\lfloor n/d\rfloor}-1)(q^{\lfloor n/d\rfloor+1}-1)}{q+1}\cr
&=\frac{q^{2\lfloor n/d\rfloor+1}+1}{q+1}.
\end{align*}
\end{proof}


\section{Determination of $\text{Fix}(A_{\{\alpha,\alpha^q\}})$}

Let 
\[
A=A_{\{\alpha,\alpha^q\}}=\left[\begin{matrix}\alpha+\alpha^q&-\alpha^{1+q}\cr 1&0\end{matrix}\right],\quad \alpha\in\f_{q^2}\setminus\f_q.
\]
We have
\begin{equation}\label{2.1}
BAB^{-1}=D,
\end{equation}
where 
\[
D=\left[\begin{matrix}\alpha^q&0\cr 0&\alpha\end{matrix}\right],\quad B=\left[\begin{matrix}1&-\alpha\cr 1&-\alpha^q\end{matrix}\right]\in\text{GL}(2,\f_{q^2}).
\]
Note that $\phi_D=\alpha^{q-1}X\in G(\f_{q^2})$.

\begin{lem}\label{L2.2}
Let $f\in\f_q(X)\setminus\f_q$ and $g=f\circ\phi_B^{-1}\in\f_{q^2}(X)$. Then $\f_q(f)$ is fixed by $A$ if and only if $\f_{q^2}(g)$ is fixed by $D$.
\end{lem}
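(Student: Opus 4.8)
The plan is to translate both fixing conditions into composition identities in $\f_{q^2}(X)$ and then reduce the lemma to a Galois descent along $\f_{q^2}/\f_q$. First I would unwind the definitions: by the definition of the $\text{GL}(2,\f_q)$-action, $\f_q(f)$ is fixed by $A$ exactly when $f\circ\phi_A\overset{\rm L}\sim f$ over $\f_q$, i.e. $f\circ\phi_A=\phi\circ f$ for some $\phi\in G(\f_q)$; likewise $\f_{q^2}(g)$ is fixed by $D$ exactly when $g\circ\phi_D=\psi\circ g$ for some $\psi\in G(\f_{q^2})$. Because $A\mapsto\phi_A$ is an isomorphism of $\text{PGL}(2,\cdot)$ onto $(G(\cdot),\circ)$, so that $\phi_{BAB^{-1}}=\phi_B\circ\phi_A\circ\phi_B^{-1}$, the conjugation $BAB^{-1}=D$ from \eqref{2.1} gives $\phi_A=\phi_B^{-1}\circ\phi_D\circ\phi_B$; since $f=g\circ\phi_B$ I can then cancel $\phi_B$ on the right to obtain, for every $\phi\in G(\f_{q^2})$,
\[
f\circ\phi_A=\phi\circ f\iff g\circ\phi_D=\phi\circ g.
\]
Thus both sides of the lemma are governed by the single equation $g\circ\phi_D=(\cdot)\circ g$, and the only real question is whether a solution in $G(\f_{q^2})$ can be replaced by one in $G(\f_q)$.

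With this reduction the forward implication is immediate, since $G(\f_q)\subseteq G(\f_{q^2})$. For the converse I would run a descent argument. Let $\tau$ be the automorphism of $\f_{q^2}(X)$ that raises coefficients to the $q$-th power and fixes $X$; it commutes with composition and with inversion in $G(\f_{q^2})$, and it fixes $f$ because $f\in\f_q(X)$. Writing $J=\left[\begin{smallmatrix}0&1\cr1&0\end{smallmatrix}\right]$ (so $\phi_J=1/X$), an entrywise computation gives $\tau(B)=JB$, from which $\tau(g)=\tau(f)\circ\phi_{\tau(B)^{-1}}=f\circ\phi_B^{-1}\circ\phi_J=g\circ\phi_J$. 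Similarly, from $\phi_D=\alpha^{q-1}X$ and $\alpha^{q^2}=\alpha$ one gets $\tau(\phi_D)=\alpha^{1-q}X=\phi_D^{-1}$, together with the commutation $\phi_J\circ\phi_D^{-1}=\phi_D\circ\phi_J$ (both equal $\alpha^{q-1}/X$).

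Now, starting from a solution $g\circ\phi_D=\psi\circ g$ with $\psi\in G(\f_{q^2})$, I would apply $\tau$ to the hypothesis and insert the twist relations above to run the chain
\[
\psi\circ g\circ\phi_J=g\circ\phi_D\circ\phi_J=\tau(g)\circ\tau(\phi_D)=\tau(g\circ\phi_D)=\tau(\psi\circ g)=\tau(\psi)\circ g\circ\phi_J.
\]
Since $g\circ\phi_J$ is nonconstant, right-cancellation forces $\psi=\tau(\psi)$. The hard part is the final step: concluding from $\tau(\psi)=\psi$ that $\psi$ actually lies in $G(\f_q)$, i.e. that the $\tau$-fixed points of $\text{PGL}(2,\f_{q^2})$ form $\text{PGL}(2,\f_q)$. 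I would settle this by Hilbert~90: lifting $\psi$ to $C\in\text{GL}(2,\f_{q^2})$, the relation $\tau(\psi)=\psi$ means $\tau(C)=\lambda C$ for some $\lambda\in\f_{q^2}^*$, and applying $\tau$ once more gives $N_{\f_{q^2}/\f_q}(\lambda)=1$, so $\lambda=\mu^{q-1}$ for some $\mu\in\f_{q^2}^*$; then $\mu^{-1}C\in\text{GL}(2,\f_q)$ represents $\psi$, whence $\psi\in G(\f_q)$. Feeding this $\psi\in G(\f_q)$ back through the displayed equivalence shows that $\f_q(f)$ is fixed by $A$, completing the converse.
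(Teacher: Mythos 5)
Your proof is correct, and its first half---translating both fixing conditions into the single equation $g\circ\phi_D=\phi\circ g$ via $\phi_{BAB^{-1}}=\phi_B\circ\phi_A\circ\phi_B^{-1}$ and right-cancelling $\phi_B$---is exactly the paper's chain of equivalences. Where you genuinely diverge is the descent step, i.e.\ replacing $\psi\in G(\f_{q^2})$ by an element of $G(\f_q)$. The paper does this with its Lemma~\ref{L2.3}: if $f_2=\psi\circ f_1$ with $f_1,f_2\in\f_q(X)\setminus\f_q$ and $\psi\in G(\f)$ for \emph{any} extension $\f\supseteq\f_q$, then some $\theta\in G(\f_q)$ works; the proof is pure linear algebra (write $\f=\f_q\oplus V$ and project the matrix identity $\left[\begin{smallmatrix}a&b\cr c&d\end{smallmatrix}\right]\left[\begin{smallmatrix}P_1\cr Q_1\end{smallmatrix}\right]=\left[\begin{smallmatrix}P_2\cr Q_2\end{smallmatrix}\right]$ onto its $\f_q$-component), so it needs no Galois theory and applies to arbitrary, even infinite, extensions---but it only produces \emph{some} $\f_q$-rational $\theta$, not the original $\psi$. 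Your route instead exploits the specific structure: the twist relations $\tau(B)=JB$ (hence $\tau(g)=g\circ\phi_J$) and $\tau(\phi_D)=\phi_D^{-1}$ with $\phi_J\circ\phi_D^{-1}=\phi_D\circ\phi_J$ all check out, the right-cancellation of the nonconstant $g\circ\phi_J$ is legitimate since substitution of a transcendental element is injective, and the Hilbert~90 step ($\lambda^{q+1}=1$, so $\lambda=\mu^{q-1}$ and $\mu^{-1}C\in\mathrm{GL}(2,\f_q)$) correctly identifies the $\tau$-fixed points of $\mathrm{PGL}(2,\f_{q^2})$ with $\mathrm{PGL}(2,\f_q)$. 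So you prove the stronger fact that the very $\psi$ solving $g\circ\phi_D=\psi\circ g$ already lies in $G(\f_q)$, at the price of an argument tailored to the quadratic Galois extension $\f_{q^2}/\f_q$ and the particular matrices $B,D$. It is worth noticing that your identity $\tau(g)=g\circ\phi_J$ is equivalent to the relation $\bar\phi_B=X^{-1}\circ\phi_B$ that the paper deploys separately in Lemma~\ref{L2.4}, so in effect your descent merges the roles of Lemmas~\ref{L2.3} and~\ref{L2.4} into one Galois-equivariance computation.
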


\begin{proof}
We have 
\begin{align*}
& \f_{q^2}(g)\ \text{is fixed by}\ D\cr
\Leftrightarrow\ &g\circ\phi_D=\psi\circ g\ \text{for some}\ \psi\in G(\f_{q^2})\cr
\Leftrightarrow\ &f\circ\phi_A=\psi\circ f\ \text{for some}\ \psi\in G(\f_{q^2})&\text{(by \eqref{2.1})}\cr
\Leftrightarrow\ &f\circ\phi_A=\psi\circ f\ \text{for some}\ \psi\in G(\f_{q})&\text{(by Lemma~\ref{L2.3})}\cr
\Leftrightarrow\ &\f_q(f)\ \text{is fixed by}\ A.&
\end{align*}
\end{proof}

\begin{lem}\label{L2.3}
Let $f_1,f_2\in\f_q(X)\setminus\f_q$ be such that there exists $\psi\in G(\f)$, where $\f$ is an extension of $\f_q$, such that $f_2=\psi\circ f_1$. Then there exists $\theta\in G(\f_q)$ such that $f_1=\theta\circ f_2$.
\end{lem}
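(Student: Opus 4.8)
The plan is to exploit the hypothesis that $f_1,f_2$ already lie in $\f_q(X)$, so that the only thing I need to upgrade is the field of definition of the degree-one function relating them. First I would write $f_2=\psi\circ f_1$ with $\psi\in G(\f)$ and solve for $\psi$: since $f_1$ is nonconstant, it has an inverse in the monoid sense only after extending scalars, but more usefully I can recover $\psi$ as $\psi=f_2\circ f_1^{\langle-1\rangle}$ where $f_1^{\langle-1\rangle}$ denotes a right inverse in $\f(X)$. The cleaner route is to observe that both $f_1$ and $f_2$ take values in $\f_q(X)\subset\f(X)$, and that $\psi=f_2\circ f_1^{-1}$ (composition inverse of $f_1$ viewed as a rational map) is determined by finitely many values; I would instead argue directly that the coefficients of $\psi$ must be $\f_q$-rational.

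The key step is the following rationality argument. Let $\tau$ be any element of $\text{Gal}(\f/\f_q)$ extended to act on $\f(X)$ coefficientwise (fixing $X$). Because $f_1,f_2\in\f_q(X)$, we have $\tau(f_1)=f_1$ and $\tau(f_2)=f_2$. Applying $\tau$ to the identity $f_2=\psi\circ f_1$ and using that $\tau$ commutes with composition (as it fixes $X$ and acts on coefficients), I get
\[
f_2=\tau(f_2)=\tau(\psi)\circ\tau(f_1)=\tau(\psi)\circ f_1.
\]
Thus both $\psi$ and $\tau(\psi)$ satisfy $\psi\circ f_1=f_2=\tau(\psi)\circ f_1$. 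Since $f_1$ is nonconstant, right-composition by $f_1$ is injective on $G(\f)$ (if $\psi\circ f_1=\psi'\circ f_1$ then $\psi=\psi'$ by cancelling the nonconstant $f_1$, e.g.\ by comparing degrees or substituting), so $\tau(\psi)=\psi$ for every $\tau\in\text{Gal}(\f/\f_q)$. Hence all coefficients of $\psi$ lie in $\f_q$, giving $\psi\in G(\f_q)$.

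Finally I would conclude by inverting. Since $\psi\in G(\f_q)$ and $G(\f_q)$ is a group, $\theta:=\psi^{-1}\in G(\f_q)$, and from $f_2=\psi\circ f_1$ we obtain $f_1=\psi^{-1}\circ f_2=\theta\circ f_2$, as required. The main obstacle I anticipate is making the cancellation ``$\psi\circ f_1=\psi'\circ f_1\Rightarrow\psi=\psi'$'' fully rigorous: one must verify that a nonconstant $f_1$ is right-cancellable in the composition monoid, which holds because $\f(f_1)$ has finite index in $\f(X)$ and a degree-one function is determined by its action on the subfield $\f(f_1)$ — equivalently, $f_1$ is surjective as a map $\Bbb P^1(\overline{\f})\to\Bbb P^1(\overline{\f})$, so its image is Zariski dense and two Möbius transformations agreeing on it must coincide. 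Everything else is formal.
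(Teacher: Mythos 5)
Your Galois-descent argument is a genuinely different route from the paper's, and its core steps are sound: the coefficientwise action of $\tau$ does commute with composition, and your cancellation claim is correct (if $\mu\circ f_1=f_1$ with $\mu$ of degree one, then $\mu$ is the identity on the image $f_1(\Bbb P^1(\overline{\f}))$, which is infinite since $f_1$ is nonconstant, so $\mu=\mathrm{id}$; ``comparing degrees'' alone would not suffice, but your density argument does). The genuine gap is in the final descent step: from ``$\tau(\psi)=\psi$ for every $\tau\in\text{Gal}(\f/\f_q)$'' you conclude that the coefficients of $\psi$ lie in $\f_q$, and that requires the fixed field of $\text{Aut}(\f/\f_q)$ to be exactly $\f_q$. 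The lemma, however, allows $\f$ to be an \emph{arbitrary} extension of $\f_q$. For algebraic $\f$ your step is fine, since every subextension of $\overline{\f_q}/\f_q$ is Galois (the absolute Galois group is abelian), and this covers the paper's only application, $\f=\f_{q^2}$ in Lemma~\ref{L2.2}. But for transcendental $\f$ it fails: for $\f=\f_q(t)$ one has $\text{Aut}(\f/\f_q)\cong\text{PGL}(2,\f_q)$ acting on $t$, whose fixed field is a proper subfield $\f_q(u)$ with $[\f_q(t):\f_q(u)]=q^3-q$, strictly larger than $\f_q$; and there are extensions whose automorphism group over $\f_q$ is trivial, in which case your argument yields nothing. (A smaller point: $\tau(\psi)=\psi$ holds a priori only in $\text{PGL}$, i.e., up to a scalar, so you should fix a normalized representative of $\psi$ before reading off its coefficients.)

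The gap is repairable within your framework: since $f_1$ is surjective on $\Bbb P^1(\overline{\f_q})$ and $f_2$ maps $\Bbb P^1(\overline{\f_q})$ into itself, the relation $\psi\circ f_1=f_2$ forces $\psi$ to carry $\Bbb P^1(\overline{\f_q})$ into itself; a degree-one map is determined by its values at three points (cross-ratio), so the normalized coefficients of $\psi$ are algebraic over $\f_q$, and you may replace $\f$ by $\f\cap\overline{\f_q}$, where your Galois argument closes. The paper avoids all of this with a short linear-algebra descent valid for any extension whatsoever: writing $f_i=P_i/Q_i$ in lowest terms over $\f_q$, the hypothesis yields $\left[\begin{smallmatrix}a&b\cr c&d\end{smallmatrix}\right]\left[\begin{smallmatrix}P_1\cr Q_1\end{smallmatrix}\right]=\left[\begin{smallmatrix}P_2\cr Q_2\end{smallmatrix}\right]$ with $a,b,c,d\in\f$; choosing an $\f_q$-vector-space splitting $\f=\f_q\oplus V$ and comparing components shows the $\f_q$-parts $a_0,b_0,c_0,d_0$ already satisfy the same identity --- in effect, a linear system with coefficients in $\f_q$ that is solvable over $\f$ is solvable over $\f_q$ --- and the resulting matrix is invertible, since otherwise $P_2,Q_2$ would be $\f_q$-proportional and $f_2$ constant. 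Your approach buys conceptual clarity (it exhibits $\psi$ itself as $\f_q$-rational by descent) at the price of hypotheses on $\f$; the paper's proof buys full generality with bare linear algebra.
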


\begin{proof}
Let $f_i=P_i/Q_i$, where $P_i,Q_i\in\f_q[X]$ and $\text{gcd}(P_i,Q_i)=1$. It suffices to show that there exist $a_0,b_0,c_0,d_0\in\f_q$ such that 
\begin{equation}\label{2.12}
\left[\begin{matrix}a_0&b_0\cr c_0&d_0\end{matrix}\right]\left[\begin{matrix}P_1\cr Q_1\end{matrix}\right]=\left[\begin{matrix}P_2\cr Q_2\end{matrix}\right].\end{equation}
By assumption, there exist $a,b,c,d\in\f$ such that 
\[
\left[\begin{matrix}a&b\cr c&d\end{matrix}\right]\left[\begin{matrix}P_1\cr Q_1\end{matrix}\right]=\left[\begin{matrix}P_2\cr Q_2\end{matrix}\right].
\]
Write $\f=\f_q\oplus V$ as a direct sum of $\f_q$-subspaces, and write $a=a_0+a_1$, $b=b_0+b_1$, $c=c_0+c_1$, $d=d_0+d_1$, where $a_0,b_0,c_0,d_0\in\f_q$ and $a_1,b_1,c_1,d_1\in V$. Then
\[
\left[\begin{matrix}a_0&b_0\cr c_0&d_0\end{matrix}\right]\left[\begin{matrix}P_1\cr Q_1\end{matrix}\right]+\left[\begin{matrix}a_1&b_1\cr c_1&d_1\end{matrix}\right]\left[\begin{matrix}P_1\cr Q_1\end{matrix}\right]=\left[\begin{matrix}P_2\cr Q_2\end{matrix}\right].
\]
Comparing the coefficients in the above gives \eqref{2.12}.
\end{proof}

\begin{lem}\label{L2.4}
For $g\in\f_{q^2}(X)$, $g\circ\phi_B\in\f_q(X)$ if and only if $\bar g(X)=g(X^{-1})$, where $\bar g$ denotes the rational function obtained by applying $(\ )^q$ to the coefficients of $g$.
\end{lem}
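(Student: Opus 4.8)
The plan is to characterize membership in $\f_q(X)$ as a Frobenius fixed-point condition and then exploit the fact that this coefficient-Frobenius is compatible with composition. Let $\tau$ be the automorphism of $\f_{q^2}(X)$ that fixes $X$ and raises every coefficient to the $q$th power, so that $\tau(h)=\bar h$ for $h\in\f_{q^2}(X)$; by inspection of coefficients (or Galois theory), $\f_q(X)$ is exactly the fixed field $\{h\in\f_{q^2}(X):\bar h=h\}$. Thus $g\circ\phi_B\in\f_q(X)$ if and only if $\overline{g\circ\phi_B}=g\circ\phi_B$, and the whole argument reduces to rewriting this last identity.

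First I would record that $\tau$ is compatible with composition: for any $h\in\f_{q^2}(X)$ and any $\phi\in G(\f_{q^2})$,
\[
\overline{h\circ\phi}=\bar h\circ\bar\phi.
\]
This is immediate from $\tau$ being a field automorphism fixing $X$: writing $h$ as a quotient of polynomials and $\phi=(uX+v)/(wX+z)$, raising every coefficient to the $q$th power commutes with substituting $\phi$ for the variable. Next I would compute $\overline{\phi_B}$. Since $\phi_B=(X-\alpha)/(X-\alpha^q)$ and $\alpha^{q^2}=\alpha$, raising the coefficients to the $q$th power gives
\[
\overline{\phi_B}=\frac{X-\alpha^q}{X-\alpha}=\frac1{\phi_B}=\iota\circ\phi_B,
\]
where $\iota\in G(\f_{q^2})$ is the inversion $\iota(X)=X^{-1}$ (corresponding to $\left[\begin{smallmatrix}0&1\cr1&0\end{smallmatrix}\right]$ and satisfying $\iota\circ\iota=\mathrm{id}$). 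This computation is the crux: it is exactly what makes the map $X\mapsto X^{-1}$ surface in the statement.

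Combining these, $\overline{g\circ\phi_B}=\bar g\circ\overline{\phi_B}=\bar g\circ\iota\circ\phi_B$, so the fixed-point condition reads $\bar g\circ\iota\circ\phi_B=g\circ\phi_B$. Since $\phi_B\in G(\f_{q^2})$ is invertible, I would compose on the right with $\phi_B^{-1}$ to cancel it, obtaining the equivalent identity $\bar g\circ\iota=g$. Composing once more on the right with $\iota$ (using $\iota\circ\iota=\mathrm{id}$) turns this into $\bar g=g\circ\iota$, i.e. $\bar g(X)=g(X^{-1})$, which is the asserted condition; every step is an equivalence because right composition with a degree-one function is a bijection. I do not expect a genuine obstacle here: the only points requiring care are the bookkeeping of composition order and the identity $\overline{\phi_B}=\iota\circ\phi_B$, after which the conclusion follows by pure cancellation.
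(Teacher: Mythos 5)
Your proof is correct and is essentially the paper's own argument: the paper likewise characterizes $\f_q(X)$ as the fixed field of the coefficient Frobenius, records the key identity $\overline{\phi_B}=X^{-1}\circ\phi_B$, and cancels $\phi_B$ on the right to reduce to $\bar g=g\circ X^{-1}$. Your write-up merely makes explicit the compatibility $\overline{h\circ\phi}=\bar h\circ\bar\phi$ and the involutivity of $X\mapsto X^{-1}$, which the paper uses implicitly.
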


\begin{proof}
Recall that $\phi_B(X)=(X-\alpha)/(X-\alpha^q)$. Since $\bar\phi_B=X^{-1}\circ\phi_B$, we have
\begin{align*}
g\circ\phi_B\in\f_q(X)\ \Leftrightarrow\ &\overline{g\circ\phi_B}=g\circ\phi_B\cr
\Leftrightarrow\ &\bar g\circ X^{-1}\circ\phi_B=g\circ\phi_B\cr
\Leftrightarrow\ &\bar g=g\circ X^{-1}.
\end{align*}
\end{proof}

Lemmas~\ref{L2.2} and \ref{L2.4} suggest the following strategy (which we will follow) to determine $\text{Fix}(A_{\{\alpha,\alpha^q\}})$:

\begin{itemize}
\item[Step 1.] Determine all $g\in\f_{q^2}(X)$ of degree $n$ such that $\f_{q^2}(g(\alpha^{q-1}X))=\f_{q^2}(g(X))$.

\item[Step 2.] Among all $g$'s in Step 1, determine those such that $\bar g(X)=g(X^{-1})$.

\item[Step 3.] Conclude that $\text{Fix}(A_{\{\alpha,\alpha^q\}})=|G(\f_q)|^{-1}\cdot(\text{the number of $g$'s in Step 2})$.
\end{itemize}    

We now carry out these steps in detail.

\medskip
{\bf Step 1.} Determine all $g\in\f_{q^2}(X)$ of degree $n$ such that $\f_{q^2}(g(\alpha^{q-1}X))=\f_{q^2}(g(X))$.

Let $d=o(\alpha^{q-1})$. By Lemma~\ref{L2.0}, for $g\in\f_{q^2}(X)$ with $\deg g=n$, $\f_{q^2}(g(\alpha^{q-1}X))=\f_{q^2}(g(X))$ if and only if 
\begin{equation}\label{2.13}
\mathcal S(g)=\langle X^{r_1}P_1(X^d),\,X^{r_2}Q_1(X^d)\rangle_{\f_{q^2}},
\end{equation}
where $0\le r_1,r_2<d$, $P_1,Q_1\in\f_{q^2}[X]$ are monic, $\deg(X^{r_2}Q_1(X^d))<\deg(X^{r_1}P_1(X^d))=n$, $\text{gcd}(X^{r_1}P_1,X^{r_2}Q_1)=1$, and $\langle\ \ \rangle_{\f_{q^2}}$ is the $\f_{q^2}$-span.

In \eqref{2.13}, let $m=\deg(X^{r_2}Q_1(X^d))$. Note that $n\equiv r_1\pmod d$, $m\equiv r_2\pmod d$, $\text{gcd}(P_1,Q_1)=1$, and one of the following holds: (i) $r_1=r_2=0$; (ii) $r_1=0$, $r_2>0$, $P_1(0)\ne 0$; (iii) $r_1>0$, $r_2=0$, $Q_1(0)\ne 0$. Let $g\in\f_{q^2}(X)$ satisfy \eqref{2.13}, i.e., 
\begin{equation}\label{2.14}
g=\frac{sX^{r_1}P_1(X^d)+tX^{r_2}Q_1(X^d)}{uX^{r_1}P_1(X^d)+vX^{r_2}Q_1(X^d)},
\end{equation}
where $\left[\begin{smallmatrix}s&t\cr u&v\end{smallmatrix}\right]\in\text{GL}(2,\f_{q^2})$.

\medskip
{\bf Step 2.} Among all $g$'s in Step 1, determine those such that $\bar g(X)=g(X^{-1})$.

For fixed $r_1$ and $r_2$, let
\[
N(r_1,r_2)=\text{the number of $g$ satisfying \eqref{2.13} and $\bar g(X)=g(X^{-1})$}.
\] 

\medskip
{\bf Case (i)} Assume $r_1=r_2=0$. In this case, we may write \eqref{2.14} as
\[
g(X)=\epsilon\frac{P(X^d)}{Q(X^d)},
\]
where $\epsilon\in\f_{q^2}^*$, $P,Q\in\f_{q^2}[X]$ are monic, $\deg P=l_1$, $\deg Q=l_2$, $\max\{l_1,l_2\}=n/d=:k$, and $\text{gcd}(P,Q)=1$. Then
\[
g(X^{-1})=\epsilon\frac{X^{dk}P(X^{-d})}{X^{dk}Q(X^{-d})},
\]
so $\bar g(X)=g(X^{-1})$ if and only if
\begin{align}\label{2.15}
\bar\epsilon\overline P(X)\,&=c\epsilon X^k P(X^{-1}),\\ \label{2.16}
\overline Q(X)\,&=cX^k Q(X^{-1})
\end{align}
for some $c\in\f_{q^2}^*$.

First, assume that $l_1=k$, and $l_2\le k$ is fixed. Then \eqref{2.16} is equivalent to
\[
Q(X)=X^{k-l_2}Q_1(X),
\]
where $Q_1\in\f_{q^2}[X]$, $\deg Q_1=2l_2-k$ (thus $k/2\le l_2\le k$), and
\begin{equation}\tag{\ref{2.16}$'$}\label{2.16'}
\overline Q_1(X)=cX^{2l_2-k}Q_1(X^{-1}).
\end{equation}
We call a polynomial $f\in\f_{q^2}[X]\setminus\{0\}$ {\em self-dual} if $X^{\deg f}\bar f(X^{-1})=cf(X)$ for some $c\in\f_{q^2}^*$. 
Thus, if $g$ satisfies \eqref{2.13} and $\bar g(X)=g(X^{-1})$, then both $P$ and $Q_1$ are self-dual. On the other hand, if both $P$ and $Q_1$ are self-dual, then the $c$ in \eqref{2.16} belongs to $\mu_{q+1}:=\{x\in\f_{q^2}:x^{q+1}=1\}$ and $c$ is uniquely determined by $Q_1$. Subsequently, in \eqref{2.15}, $\epsilon^{q-1}$ is uniquely determined and there are $q-1$ choices for $\epsilon$. Therefore, in this case, the number of $g$ satisfying \eqref{2.13} and $\bar g(X)=g(X^{-1})$ is 
\begin{align}\label{2.17}
&(q-1)\,|\{(P,Q_1): P,Q_1\in\f_{q^2}[X]\ \text{are monic and self-dual},\\
&\kern7.6em \deg P=k,\ \deg Q_1=2l_2-k,\ \text{gcd}(P,Q_1)=1\}|\cr
=\,&(q-1)\Gamma_{k,2l_2-k},
\nonumber
\end{align}
where
\begin{align*}
\Gamma_{i,j}=\{(f_1,f_2):\,& f_1,f_2\in\f_{q^2}[X]\ \text{are monic and self-dual},\\
&\deg f_1=i,\ \deg f_2=j,\ \text{gcd}(f_1,f_2)=1\}|.
\end{align*}
The number $\Gamma_{i,j}$ is determined in Appendix, Lemma~A\ref{LA5}.

Next, assume that $l_2=k$ and $l_1<k$ is fixed. By the same argument, the number of $g$ satisfying \eqref{2.13} and $\bar g(X)=g(X^{-1})$ is $(q-1)\Gamma_{k,2l_1-k}$.

Therefore, the total number of $g$ satisfying \eqref{2.13} and $\bar g(X)=g(X^{-1})$ in Case (i) is
\begin{align*}
N(0,0)\,&=(q-1)\sum_{k/2\le l_2\le k}\Gamma_{k,2l_2-k}+(q-1)\sum_{k/2\le l_1< k}\Gamma_{k,2l_1-k}\cr
&=(q-1)\Bigl(2\sum_{\substack{0\le i<k\cr i\equiv k\,(\text{mod}\,2)}}\Gamma_{i,k}+\Gamma_{k,k}\Bigr).
\end{align*}
If $k=2k_1$,
\begin{align*}
N(0,0)\,&=(q-1)\Bigl(2\sum_{0\le i<k_1}\Gamma_{2i,2k_1}+\Gamma_{2k_1,2k_1}\Bigr)\cr
&=(q-1)\Bigl[2\Bigl(q^{2k_1-1}(q+1)+\sum_{1\le i<k_1}\frac{q^{2(k_1-i)-1}(q+1)(q^2-1)}{q^2+1}(q^{4i}-1)\Bigr)\cr
&\kern 5em +\frac{q(q+1)}{q^2+1}(q^{4k_1}-q^{4k_1-2}-2)\Bigr]\kern5.3em\text{(by Lemma~A\ref{LA5})}\cr
&=(q-1)\Bigl[2q^{2k_1-1}(q+1)+2\frac{(q+1)(q^2-1)q^{2k_1-1}}{q^2+1}\sum_{1\le i<k_1}(q^{2i}-q^{-2i})\cr
&\kern 5em +\frac{q(q+1)}{q^2+1}(q^{4k_1}-q^{4k_1-2}-2)\Bigr]\cr
&=(q^2-1)\Bigl[2q^{2k_1-1}+\frac{2(q^2-1)q^{2k_1-1}}{q^2+1}\Bigl(q^2\frac{1-q^{2(k_1-1)}}{1-q^{2}}-q^{-2}\frac{1-q^{-2(k_1-1)}}{1-q^{-2}}\Bigr)\cr
&\kern 5em +\frac{q}{q^2+1}(q^{4k_1}-q^{4k_1-2}-2)\Bigr]\cr
&=(q^2-1)q^{4k_1-1}.
\end{align*}
If $k=2k_1+1$,
\begin{align*}
N(0,0)\,&=(q-1)\Bigl(2\sum_{0\le i<k_1}\Gamma_{2i+1,2k_1+1}+\Gamma_{2k_1+1,2k_1+1}\Bigr)\cr
&=(q-1)\Bigl[2\sum_{0\le i<k_1}\frac{q^{2(k_1-i)-1}(q+1)(q^2-1)}{q^2+1}(q^{4i+2}+1)\cr
&\kern 5em +\frac{q(q+1)}{q^2+1}(q^{4k_1+2}-q^{4k_1}+2)\Bigr] \kern5em\text{(by Lemma~A\ref{LA5})}\cr
&=(q-1)\Bigl[2\frac{(q+1)(q^2-1)q^{2k_1-1}}{q^2+1}\sum_{0\le i<k_1}(q^{2i+2}+q^{-2i})\cr
&\kern 5em +\frac{q(q+1)}{q^2+1}(q^{4k_1+2}-q^{4k_1}+2)\Bigr]\cr
&=(q^2-1)\Bigl[\frac{2(q^2-1)q^{2k_1-1}}{q^2+1}\Bigl( q^2\frac{1-q^{2k_1}}{1-q^2}+\frac{1-q^{-2k_1}}{1-q^{-2}}\Bigr)\cr
&\kern 5em +\frac{q}{q^2+1}(q^{4k_1+2}-q^{4k_1}+2)\Bigr]\cr
&=(q^2-1)q^{4k_1+1}.
\end{align*}
Therefore, we always have
\begin{equation}\label{case1}
N(0,0)=(q^2-1)q^{2k-1}.
\end{equation}

\medskip

{\bf Case (ii)} Assume $r_1=0$, $r_2>0$ and $P_1(0)\ne 0$. By \eqref{2.14},
\[
g(X^{-1})=\frac{sX^nP_1(X^{-d})+tX^{n-r_2}Q_1(X^{-d})}{uX^nP_1(X^{-d})+vX^{n-r_2}Q_1(X^{-d})}.
\]
Hence $\bar g(X)=g(X^{-1})$ if and only if 
\[
\begin{cases}
\bar s\overline P_1(X^d)+\bar tX^{r_2}\overline Q_1(X^d)=c\bigl[sX^nP_1(X^{-d})+tX^{n-r_2}Q_1(X^{-d})\bigr],\vspace{0.2em}\cr
\bar u\overline P_1(X^d)+\bar vX^{r_2}\overline Q_1(X^d)=c\bigl[uX^nP_1(X^{-d})+vX^{n-r_2}Q_1(X^{-d})\bigr]
\end{cases}
\]
for some $c\in\f_{q^2}^*$, which is equivalent to
\begin{equation}\label{2.18}
\begin{cases}
\bar s\overline P_1(X^d)=csX^nP_1(X^{-d}),\cr
\bar u\overline P_1(X^d)=cuX^nP_1(X^{-d}),\cr
\bar tX^{r_2}\overline Q_1(X^d)=ctX^{n-r_2}Q_1(X^{-d}),\cr
\bar vX^{r_2}\overline Q_1(X^d)=cvX^{n-r_2}Q_1(X^{-d}).
\end{cases}
\end{equation}
Let $k=n/d$ and $l=(m-r_2)/d$. The above equations imply that $\overline P_1(X)$ self-dual and $\overline Q_1(X^d)=\delta X^{n-2r_2}Q_1(X^{-d})$ for some $\delta\in\f_{q^2}^*$. It is necessary that $n-2r_2\equiv 0\pmod d$, i.e., $d$ is even and $r_2=d/2$. Hence $\overline Q_1(X)=\delta X^{k-1}Q_1(X^{-1})$. It follows that $Q_1(X)=X^{k-l-1}Q_2(X)$, where $Q_2(X)$ is monic and self-dual of degree $2l-k+1$. (So $(k-1)/2\le l\le k-1$.)

On the other hand, let $P_1,Q_2\in\f_{q^2}[X]$ be monic and self-dual with $\deg P_1=k$ and $\deg Q_2=2l-k+1$ ($(k-1)/2\le l\le k-1$). Then $\overline P_1(X)=\epsilon X^kP_1(X^{-1})$ and $\overline Q_2(X)=\delta X^{2l-k+1}Q_2(X^{-1})$ for some $\epsilon,\delta\in\mu_{q+1}$. 
Let $Q_1(X)=X^{k-l-1}Q_2(X)$. Then \eqref{2.18} is satisfied if and only if
\begin{equation}\label{2.19}
\begin{cases}
\bar s\epsilon=cs,\cr
\bar u\epsilon=cu,\cr
\bar t\delta=ct,\cr
\bar v\delta=cv.
\end{cases}
\end{equation}
Under the assumption that $\det\left[\begin{smallmatrix}s&t\cr u&v\end{smallmatrix}\right]\ne 0$, \eqref{2.19} implies that $c\in\mu_{q+1}$. Write $\epsilon=\epsilon_0^{q-1}$, $\delta=\delta_0^{q-1}$ and $c=c_0^{q-1}$, where $\epsilon_0,\delta_0,c_0\in\f_{q^2}^*$. Then \eqref{2.19} is satisfied if and only if
\[
\left[\begin{matrix}s&t\cr u&v\end{matrix}\right]=\left[\begin{matrix}s_1c_0/\epsilon_0&t_1c_0/\delta_0\cr u_1c_0/\epsilon_0&v_1c_0/\delta_0\end{matrix}\right],
\]
where $s_1,t_1,u_1,v_1\in\f_q$. Therefore, the number of $\left[\begin{smallmatrix}s&t\cr u&v\end{smallmatrix}\right]$ satisfying \eqref{2.18} is
\[
(q+1)\,|\text{GL}(2,\f_q)|=q(q^2-1)^2.
\]

To recap, when $d$ is even, $r_2=d/2$ and $l$ ($(k-1)/2\le l\le k-1$) is fixed, the number of $g$ satisfying \eqref{2.13} and $\bar g(X)=g(X^{-1})$ is
\[
\frac 1{q^2-1}q(q^2-1)^2\,\Gamma_{k,2l-k+1}=q(q^2-1)\Gamma_{k,2l-k+1}.
\]
Hence, when $d$ is even, 
\[
N(0,r_2)=q(q^2-1)\sum_{(k-1)/2\le l\le k-1}\Gamma_{k,2l-k+1}=q(q^2-1)\sum_{\substack{0\le i\le k-1\cr i\equiv k-1\,(\text{mod}\,2)}}\Gamma_{i,k}.
\]
In the above, if $k=2k_1$, 
\begin{align*}
N(0,r_2)\,&=q(q^2-1)\sum_{1\le i\le k_1}\Gamma_{2i-1,2k_1}\cr
&=q(q^2-1)\sum_{1\le i\le k_1}\frac{q^{2k_1-(2i-1)-1}(q+1)(q^2-1)}{q^2+1}(q^{4i-2}+1)\cr
&\kern18.5em \text{(by Lemma~A\ref{LA5})}\cr
&=\frac{q(q+1)(q^2-1)^2}{q^2+1}q^{2k_1}\sum_{1\le i\le k_1}(q^{2i-2}+q^{-2i})\cr
&=\frac{(q+1)(q^2-1)^2 q^{2k_1+1}}{q^2+1}\Bigl(\frac{1-q^{2k_1}}{1-q^2}+q^{-2}\frac{1-q^{-2k_1}}{1-q^{-2}}\Bigr)\cr
&=\frac{(q+1)(q^2-1)^2 q^{2k_1+1}}{q^2+1}\cdot\frac{q^{-2k_1}(q^{4k_1}-1)}{q^2-1}\cr
&=\frac{q(q+1)(q^2-1)(q^{4k_1}-1)}{q^2+1}.
\end{align*}
If $k=2k_1+1$,
\begin{align*}
N(0,r_2)\,&=q(q^2-1)\sum_{0\le i\le k_1}\Gamma_{2i,2k_1+1}\cr
&=q(q^2-1)\Bigl[q^{2k_1}(q+1)+\sum_{1\le i\le k_1}\frac{q^{2k_1+1-2i-1}(q+1)(q^2-1)}{q^2+1}(q^{4i}-1)\Bigr]\cr
&\kern23.3em \text{(by Lemma~A\ref{LA5})}\cr
&=q(q^2-1)(q+1)\Bigl[q^{2k_1}+\frac{(q^2-1)q^{2k_1}}{q^2+1}\sum_{1\le i\le k_1}(q^{2i}-q^{-2i})\Bigr]\cr
&=q(q^2-1)(q+1)\Bigl[q^{2k_1}+\frac{(q^2-1)q^{2k_1}}{q^2+1}\Bigl(q^2\frac{1-q^{2k_1}}{1-q^2}-q^{-2}\frac{1-q^{-2k_1}}{1-q^{-2}}\Bigr)\Bigr]\cr
&=q(q^2-1)(q+1)\frac{1+q^{4k_1+2}}{1+q^2}\cr
&=\frac{q(q+1)(q^2-1)(q^{4k_1+2}+1)}{q^2+1}.
\end{align*}

To summarize, we have
\begin{equation}\label{case2}
N(0,r_2)=\begin{cases}
\displaystyle \frac{q(q+1)(q^2-1)(q^{2k}-(-1)^k)}{q^2+1}&\text{if $d$ is even},\cr
0&\text{if $d$ is odd}.
\end{cases}
\end{equation}

\medskip
{\bf Case (iii)} Assume $r_1>0$, $r_2=0$ and $Q_1(0)\ne 0$. By \eqref{2.14},
\[
g(X^{-1})=\frac{sX^{n-r_1}P_1(X^{-d})+tX^nQ_1(X^{-d})}{uX^{n-r_1}P_1(X^{-d})+vX^nQ_1(X^{-d})}.
\]
Hence $\bar g(X)=g(X^{-1})$ if and only if
\[
\begin{cases}
\bar sX^{r_1}\overline{P_1}(X^{d})+\bar t\,\overline{Q_1}(X^{d})=c\bigl[sX^{n-r_1}P_1(X^{-d})+tX^nQ_1(X^{-d}) \bigr],\vspace{0.2em}\cr
\bar uX^{r_1}\overline{P_1}(X^{d})+\bar v\overline{Q_1}(X^{d})=c\bigl[uX^{n-r_1}P_1(X^{-d})+vX^nQ_1(X^{-d}) \bigr]
\end{cases}
\]
for some $c\in\f_{q^2}^*$, which is equivalent to
\begin{equation}\label{2.20}
\begin{cases}
\bar sX^{r_1}\overline{P_1}(X^d)=ctX^nQ_1(X^{-d}),\cr
\bar uX^{r_1}\overline{P_1}(X^d)=cvX^nQ_1(X^{-d}),\cr
\bar t\,\overline{Q_1}(X^d)=csX^{n-r_1}P_1(X^{-d}),\cr
\bar v\overline{Q_1}(X^d)=cuX^{n-r_1}P_1(X^{-d}).
\end{cases}
\end{equation}
Under the assumption that $\det\left[\begin{smallmatrix}s&t\cr u&v\end{smallmatrix}\right]\ne 0$, \eqref{2.20} implies that $s,t,u,v\ne 0$ and $c\in\mu_{q+1}$. Without loss of generality, assume $s=1$. Then \eqref{2.20} becomes
\begin{equation}\label{2.21}
\begin{cases}
\overline{P_1}(X)=ctX^kQ_1(X^{-1}),\cr
c\in\mu_{q+1},\cr
v=\bar ut,
\end{cases}
\end{equation}
where $k=(n-r_1)/d$. Moreover, 
\[
\det\left[\begin{matrix}1&t\cr u&v\end{matrix}\right]=\det\left[\begin{matrix}1&t\cr u&\bar ut\end{matrix}\right]=t(\bar u-u),
\]
which is nonzero if and only if $t\in\f_{q^2}^*$ and $u\in\f_{q^2}\setminus\f_q$.

Condition \eqref{2.21} implies that
\[
\widetilde P_1:=X^k\overline P_1(X^{-1})=ctQ_1(X),
\]
where $\text{gcd}(P_1,\widetilde P_1)=\text{gcd}(P_1,Q_1)=1$.

On the other hand, to satisfy \eqref{2.21} with $u\in\f_{q^2}\setminus\f_q$, we first choose monic $P_1(X)\in\f_{q^2}[X]$ of degree $k$ such that  $\text{gcd}(P_1,\widetilde{P_1})=1$; the number of choices of such $P_1$, denoted by $\Theta_k$, is determined in Appendix, Lemma~A\ref{LA4}. Next,  let $Q_1(X)=\epsilon X^k\overline{P_1}(X^{-1})$, where $\epsilon\in\f_{q^2}^*$ is such that $Q_1$ is monic. Afterwards, choose $c\in\mu_{q+1}$ and $u\in\f_{q^2}\setminus\f_q$ arbitrarily, and let $t$ and $v$ be uniquely determined by \eqref{2.21}. Hence the total number of $g$ satisfying \eqref{2.13} and $\bar g(X)=g(X^{-1})$ in Case (iii) is
\begin{align}\label{case3}
N(r_1,0)\,
&=(q+1)(q^2-q)\Theta_k\\
&=\frac{q(q^2-1)}{1+q^2}\bigl[(-1)^k(1+q)+q^{2k+1}(q-1)\bigr]\ \text{(by Lemma~A\ref{LA4})}.\nonumber
\end{align}

\medskip
{\bf Step 3.} We have
\[
\text{Fix}(A_{\{\alpha,\alpha^q\}})=\frac 1{|G(\f_q)|}\text{(the number of $g$'s in Step 2)}.
\]

\begin{thm}\label{T2.2}
Let $\alpha\in\f_{q^2}\setminus\f_q$ with $o(\alpha^{q-1})=d$. Then
\[
\text{\rm Fix}(A_{\{\alpha,\alpha^q\}})=\begin{cases}
\displaystyle q^{2n/d-2}+\frac{(q+1)(q^{2n/d}-(-1)^{n/d})}{q^2+1}&\text{if $d\mid n$ and $d$ is even},\vspace{0.2em}\cr
q^{2n/d-2}&\text{if $d\mid n$ and $d$ is odd},\vspace{0.2em}\cr
\displaystyle \frac 1{1+q^2}\bigl[(-1)^{\lfloor n/d\rfloor}(1+q)+q^{2\lfloor n/d\rfloor+1}(q-1)\bigr]&\text{if $d\nmid n$}.
\end{cases}
\]
\end{thm}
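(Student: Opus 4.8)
The plan is to assemble $\text{Fix}(A_{\{\alpha,\alpha^q\}})$ from the three cases computed in Steps 1 and 2, following the recipe of Step 3: dividing the total count of qualifying $g$'s by $|G(\f_q)|=q(q^2-1)$. The qualifying $g$'s are exactly those satisfying \eqref{2.13} together with the self-conjugacy condition $\bar g(X)=g(X^{-1})$, and they are partitioned according to the residues $(r_1,r_2)$ of $(n,m)$ modulo $d$. Since $n\equiv r_1\pmod d$ is fixed once $n$ and $d$ are given, the structure of the argument splits immediately on whether $d\mid n$: if $d\mid n$ then $r_1=0$ and only Cases (i) and (ii) contribute; if $d\nmid n$ then $r_1>0$ forces $r_2=0$ (since the alternatives $r_1=r_2=0$ and $r_1=0,r_2>0$ both require $r_1=0$), so only Case (iii) contributes.

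First I would handle the subcase $d\mid n$. Here $r_1=0$, and summing over all admissible $r_2$ we must combine Case (i), which gives $N(0,0)$, with Case (ii), which contributes $N(0,r_2)$ for each valid positive $r_2$. From the analysis in Case (ii) the only $r_2$ that can occur is $r_2=d/2$, and this requires $d$ to be even. Thus when $d$ is odd the total is just $N(0,0)=(q^2-1)q^{2k-1}$ from \eqref{case1} (with $k=n/d$), and dividing by $|G(\f_q)|=q(q^2-1)$ yields $q^{2n/d-2}$, matching the second case of the theorem. When $d$ is even we add $N(0,0)$ from \eqref{case1} to the single term $N(0,d/2)$ from \eqref{case2}; dividing the sum $(q^2-1)q^{2k-1}+\tfrac{q(q+1)(q^2-1)(q^{2k}-(-1)^k)}{q^2+1}$ by $q(q^2-1)$ should, after routine simplification, collapse to $q^{2n/d-2}+\tfrac{(q+1)(q^{2n/d}-(-1)^{n/d})}{q^2+1}$, the first case.

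Next I would treat the subcase $d\nmid n$, where only Case (iii) applies and the count is $N(r_1,0)$ from \eqref{case3}, with $k=\lfloor n/d\rfloor=(n-r_1)/d$. Dividing
\[
N(r_1,0)=\frac{q(q^2-1)}{1+q^2}\bigl[(-1)^k(1+q)+q^{2k+1}(q-1)\bigr]
\]
by $|G(\f_q)|=q(q^2-1)$ directly gives $\tfrac{1}{1+q^2}\bigl[(-1)^{\lfloor n/d\rfloor}(1+q)+q^{2\lfloor n/d\rfloor+1}(q-1)\bigr]$. I note a minor discrepancy: the theorem statement writes the trailing factor as $(q+1)$ rather than $(q-1)$, so I would double-check the sign in \eqref{case3} against the theorem and reconcile it (this is almost surely a typo in one of the two, and verifying against a small value such as $n=1$, where $\text{Fix}$ should equal $1$, will settle it).

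The bookkeeping itself is routine algebra; the main obstacle is the combinatorial verification that no admissible $(r_1,r_2)$ pair is overlooked or double-counted. Concretely, I must confirm that the three cases (i)--(iii) exhaust the disjoint possibilities (i) $r_1=r_2=0$, (ii) $r_1=0,r_2>0$, and (iii) $r_1>0,r_2=0$ flagged after \eqref{2.13}, that for $d\mid n$ exactly the even-$d$ constraint isolates $r_2=d/2$ in Case (ii), and that for $d\nmid n$ the value $r_1$ is the unique nonzero residue of $n$ so that $k=\lfloor n/d\rfloor$ is correctly identified. Once this case-exhaustion is pinned down, the three displayed formulas follow by substituting \eqref{case1}, \eqref{case2}, \eqref{case3} into the Step 3 relation and simplifying.
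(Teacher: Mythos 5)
Your proposal is correct and takes essentially the same route as the paper's own proof of Theorem~\ref{T2.2}, which likewise just divides $N(0,0)$ (for $d\mid n$, $d$ odd), $N(0,0)+N(0,d/2)$ (for $d\mid n$, $d$ even), and $N(r_1,0)$ (for $d\nmid n$) by $|G(\f_q)|=q(q^2-1)$, with the same exhaustion of the cases $(r_1,r_2)$. You are also right about the discrepancy you flagged: the trailing factor in the third case of the theorem statement should be $(q-1)$, not $(q+1)$ --- your suggested $n=1$ check (where $\text{Fix}$ must equal $1$, forcing $\frac{1}{1+q^2}\bigl[(1+q)+q(q-1)\bigr]=1$) confirms this, as do the paper's own proof of case $3^\circ$ and the last sum in $\frak B(q,n)$ in Theorem~\ref{T-main}.
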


\begin{proof}
$1^\circ$ Assume that $d\mid n$ and $d$ is even. By \eqref{case1} and \eqref{case2},
\begin{align*}
\text{Fix}(A_{\{\alpha,\alpha^q\}})\,&=\frac1{q(q^2-1)}\Bigl[(q^2-1)q^{2n/d-1}+\frac{q(q+1)(q^2-1)(q^{2n/d}-(-1)^{n/d})}{q^2+1}\Bigr]\cr
&=q^{2n/d-2}+\frac{(q+1)(q^{2n/d}-(-1)^{n/d})}{q^2+1}.
\end{align*}

\medskip
$2^\circ$ Assume that $d\mid n$ and $d$ is odd. By \eqref{case1} and \eqref{case2},
\[
\text{Fix}(A_{\{\alpha,\alpha^q\}})=\frac1{q(q^2-1)}(q^2-1)q^{2n/d-1}=q^{2n/d-2}.
\]

\medskip
$3^\circ$ Assume that $d\nmid n$. By \eqref{case3},
\begin{align*}
\text{Fix}(A_{\{\alpha,\alpha^q\}})\,&=\frac1{q(q^2-1)}\cdot\frac{q(q^2-1)}{1+q^2}\bigl[(-1)^k(1+q)+q^{2k+1}(q-1)\bigr]\cr
&=\frac 1{1+q^2}\bigl[(-1)^k(1+q)+q^{2k+1}(q-1)\bigr].
\end{align*}
\end{proof}


\section{Determination of $\text{Fix}(B_a)$}

\subsection{A useful lemma}\

Let $p=\text{char}\,\f_q$.
Every $f(X)\in\f_q[X]$ has a representation 
\begin{equation}\label{eq:rep}
f(X)=g_{p-1}(X^p-X)X^{p-1}+g_{p-2}(X^p-X)X^{p-2}+\cdots+g_0(X^p-X),
\end{equation}
where $g_i\in\f_q[X]$. Define $\Delta f=f(X+1)-f(X)$. Then $\Delta^pf=0$, and for $0\le i\le p-1$,
\[
\Delta^if=g_i(X^p-X)i!+\sum_{j=i+1}^{p-1}g_j(X^p-X)\Delta^iX^j.
\]
It follows that $g_i$ in \eqref{eq:rep} are uniquely determined by $f$.

\begin{lem}\label{LA8}
Let $0\le i\le p-1$. Then $\Delta^if=0$ if and only if $g_j=0$ for all $i\le j\le p-1$ in \eqref{eq:rep}.
\end{lem}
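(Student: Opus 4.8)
The plan is to exploit the ``triangular'' structure of the displayed identity
\[
\Delta^if=g_i(X^p-X)\,i!+\sum_{j=i+1}^{p-1}g_j(X^p-X)\,\Delta^iX^j,
\]
in which $\Delta^i f$ involves only the coefficients $g_i,g_{i+1},\dots,g_{p-1}$, while the ``diagonal'' term $g_i(X^p-X)\,i!$ carries the factor $i!$. The backward implication is then immediate: if $g_j=0$ for all $i\le j\le p-1$, every term on the right-hand side vanishes, so $\Delta^if=0$. The substance of the lemma lies in the forward implication, which I would obtain by downward induction on the index.

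Before the induction I would record two elementary facts. First, for $0\le l\le p-1$ the factorial $l!$ is a product of the elements $1,2,\dots,l$ of $\f_p^*$, hence $l!\ne 0$ in $\f_q$. Second, since $X^p-X$ is nonconstant, the substitution map $h\mapsto h(X^p-X)$ multiplies degrees by $p$ and is therefore injective on $\f_q[X]$; in particular $h(X^p-X)=0$ forces $h=0$. Finally, assuming $\Delta^if=0$, linearity of $\Delta$ gives $\Delta^lf=\Delta^{\,l-i}(\Delta^if)=0$ for every $l$ with $i\le l\le p-1$, so I may use the vanishing of all the higher differences, not just $\Delta^if$.

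The forward direction then proceeds by proving $g_l=0$ for $l=p-1,p-2,\dots,i$ in that order. For the base case $l=p-1$ the sum in the formula is empty, so $\Delta^{p-1}f=g_{p-1}(X^p-X)\,(p-1)!$; combining $\Delta^{p-1}f=0$ with $(p-1)!\ne 0$ and injectivity of the substitution yields $g_{p-1}=0$. For the inductive step I assume $g_{l+1}=\cdots=g_{p-1}=0$; then each summand in the formula for $\Delta^lf$ carries a factor $g_j$ with $j>l$ and hence vanishes, leaving $\Delta^lf=g_l(X^p-X)\,l!$, and the identical argument (using $l!\ne 0$, $\Delta^lf=0$, and injectivity) forces $g_l=0$. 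This exhausts all $l$ from $p-1$ down to $i$ and completes the proof. I do not expect a genuine obstacle: the only point requiring care is to run the induction \emph{downward}, so that at each stage the coefficients of higher index have already been eliminated and one is left with a single term whose invertible coefficient $l!$ permits the conclusion $g_l=0$.
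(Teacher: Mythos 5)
Your proof is correct, but it follows a genuinely different route from the paper's. The paper keeps the operator $\Delta^i$ fixed and argues by contradiction: taking the largest index $j_0\ge i$ with $g_{j_0}\ne 0$, it observes that $\Delta^i f$ then contains the nonzero component $g_{j_0}(X^p-X)\binom{j_0}{i}X^{j_0-i}$ in its expansion over the "basis" $1,X,\dots,X^{p-1}$, while every other contribution lands in components $h_j(X^p-X)X^j$ with $j<j_0-i$; since summands attached to distinct $j$ have degrees in distinct residue classes modulo $p$ and hence cannot cancel, $\Delta^i f\ne 0$, a contradiction. You instead vary the order of the difference operator: from $\Delta^i f=0$ you deduce $\Delta^l f=\Delta^{l-i}(\Delta^i f)=0$ for all $i\le l\le p-1$, and then eliminate the coefficients by downward induction on $l$, each step collapsing to the one-term identity $\Delta^l f=g_l(X^p-X)\,l!$ once $g_{l+1},\dots,g_{p-1}$ are known to vanish. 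What your version buys is that at every stage you face a single term, so you never need the linear independence of $1,X,\dots,X^{p-1}$ over $\f_q[X^p-X]$ (equivalently, the mod-$p$ degree argument) on which the paper's no-cancellation claim rests; the elementary facts you cite --- $l!\ne 0$ in characteristic $p$ for $l\le p-1$, and injectivity of $h\mapsto h(X^p-X)$ --- suffice. What the paper's version buys is economy of hypothesis: it works directly with the single equation $\Delta^i f=0$ rather than the whole family $\Delta^l f=0$, identifying in one stroke the surviving component. Both arguments exploit the same triangular structure and are of comparable length; your induction is sound as written.
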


\begin{proof}
($\Leftarrow$) Obvious.

\medskip
($\Rightarrow$) Assume the contrary. Let $j_0$ be the largest $j$ such that $g_j\ne 0$. Then $i\le j_0\le p-1$. We have
\begin{align*}
\Delta^if\,&=g_{j_0}(X^p-X)\Delta^iX^{j_0}+\sum_{j<j_0}g_j(X^p-X)\Delta^iX^j\cr
&=g_{j_0}(X^p-X)\binom{j_0}iX^{j_0-i}+\sum_{j<j_0-i}h_j(X^p-X)X^j\kern2.5em(h_j\in\f_q[X])\cr
&\ne 0,
\end{align*}
which is a contradiction.
\end{proof}

\subsection{Determination of $\text{Fix}(B_a)$}\

Recall that $B_a=\left[\begin{smallmatrix}a&a\cr 0&a\end{smallmatrix}\right]$, $a\in\f_q^*$, so $\phi_{B_a}=X+1$. Let $F=\f_q(P/Q)$, where $P,Q\in\f_q[X]$ are monic, $\deg P=n>\deg Q$, and $\text{gcd}(P,Q)=1$. Then $B_a(F)=\f_q(P(X+1)/Q(X+1))$. Hence $B_a(F)=F$ if and only if
\begin{equation}\label{2.5-1}
\begin{cases}
Q(X+1)=Q(X),\cr
P(X+1)=P(X)+cQ(X)\quad \text{for some}\ c\in\f_q.
\end{cases}
\end{equation}

\medskip
{\bf Case 1.} Assume $c=0$. Then \eqref{2.5-1} holds if and only if $P(X)=P_1(X^p-X)$, $Q(X)=Q_1(X^p-X)$, where $P_1,Q_1\in\f_q[X]$ are such that $\deg P_1=n/p>\deg Q_1$ (must have $p\mid n$) and $\text{gcd}(P_1,Q_1)=1$. The number of such $(P,Q)$ is $\alpha_{n/p}$.

\medskip
{\bf Case 2.} Assume $c\ne 0$. Then \eqref{2.5-1} holds if and only if
\begin{equation}\label{2.5-2}
\begin{cases}
Q=c^{-1}\Delta P,\cr
\Delta^2 P=0,\cr
\text{gcd}(P(X),P(X+1))=1.
\end{cases}
\end{equation}
Condition \eqref{2.5-2} is equivalent to
\begin{equation}\label{2.5-3}
\begin{cases}
\Delta^2P=0,\ \Delta P\ne 0,\ \text{gcd}(P(X),P(X+1))=1,\cr
Q=c^{-1}\Delta P,\ \text{where $c$ is uniquely determined by $P$}.
\end{cases}
\end{equation}
By Lemma~\ref{LA8}, the $P(X)$ in \eqref{2.5-3} has the form
\[
P(X)=P_1(X^p-X)X+P_0(X^p-X),
\]
where $P_1\ne 0$. Since $P(X+1)-P(X)=P_1(X^p-X)$, $\text{gcd}(P(X),P(X+1))=1$ if and only if $\text{gcd}(P_0,P_1)=1$. Also note that 
\[
\deg P=\max\{p\deg P_1+1,\,p\deg P_0\}.
\]
Hence the number of $(P,Q)$ satisfying \eqref{2.5-3} is
\[
\begin{cases}
(q-1)\alpha_{n/p}&\text{if}\ n\equiv 0\pmod p,\cr
q&\text{if}\ n=1,\cr
(q-1)(\alpha_{(n-1)/p}+\alpha_{(n-1)/p,(n-1)/p})&\text{if}\ n\equiv 1\pmod p,\ n>1,\cr
0&\text{otherwise}.
\end{cases}
\]
Therefore,
\[
\text{Fix}(B_a)=\begin{cases}
\displaystyle\frac 1q(\alpha_{n/p}+(q-1)\alpha_{n/p})&\text{if}\ n\equiv 0\pmod p,\vspace{0.3em}\cr
1&\text{if}\ n=1,\vspace{0.3em}\cr
\displaystyle\frac {q-1}q(\alpha_{(n-1)/p}+\alpha_{(n-1)/p,(n-1)/p})&\text{if}\ n\equiv 1\pmod p,\ n>1,\vspace{0.3em}\cr
0&\text{otherwise}.
\end{cases}
\]
Recall that $\alpha_i$ and $\alpha_{i,j}$ are given by Lemma~A\ref{LA1}. When $n\equiv 0\pmod p$,
\[
\text{Fix}(B_a)=\alpha_{n/p}=q^{2n/p-1}.
\]
When $n\equiv 1\pmod p$ and $n>1$,
\[
\text{Fix}(B_a)=\frac{q-1}q(q^{2(n-1)/p-1}+q^{2(n-1)/p}(1-q^{-1}))=q^{2(n-1)/p-1}(q-1).
\]
To summarise,
\begin{equation}\label{FixBa}
\text{Fix}(B_a)=\begin{cases}
q^{2n/p-1}&\text{if}\ n\equiv 0\pmod p,\cr
1&\text{if}\ n=1,\cr
q^{2(n-1)/p-1}(q-1)&\text{if}\ n\equiv 1\pmod p,\ n>1,\cr
0&\text{otherwise}.
\end{cases}
\end{equation}


\section{The Main Theorem}

\begin{thm}\label{T-main} 
For $n\ge 1$, we have 
\begin{equation}\label{6.1}
\frak N(q,n)=\frac{q^{2n-3}}{q^2-1}+\frac 1{2(q-1)}\frak A(q,n)+\frac 1{2(q+1)}\frak B(q,n)+\frac 1q\frak C(q,n),
\end{equation}
where
\begin{equation}\label{A(q,n)}
\frak A(q,n)=\sum_{\substack{1<d\,\mid\, q-1\cr d\,\mid\, n}}\phi(d)\Bigl(q^{2n/d-2}+\frac{(d-1)(q^{2n/d}-1)}{q+1}\Bigr)+\sum_{\substack{1<d\,\mid\, q-1\cr d\,\nmid\, n}}\phi(d)\frac{q^{2\lfloor n/d\rfloor+1}+1}{q+1},
\end{equation}
\begin{align}\label{B(q,n)}
\frak B(q,n)=\,&\sum_{\substack{d\; \text{\rm even}\cr d\,\mid\,\text{\rm gcd}(q+1,n)}}\phi(d)\Bigl(q^{2n/d-2}+\frac{(q+1)(q^{2n/d}-(-1)^{n/d})}{q^2+1}\Bigr)\\
&+\sum_{\substack{d\; \text{\rm odd}\cr 1<d\,\mid\,\text{\rm gcd}(q+1,n)}}\phi(d)q^{2n/d-2}\cr
&+\frac 1{q^2+1}\sum_{\substack{d\,\mid\, q+1\cr d\,\nmid\, n}}\phi(d)\bigl((-1)^{\lfloor n/d\rfloor}(1+q)+q^{2\lfloor n/d\rfloor+1}(q-1)\bigr),\nonumber
\end{align}
\begin{equation}\label{C(q,n)}
\frak C(q,n)=\begin{cases}
q^{2n/p-1}&\text{if}\ n\equiv 0\pmod p,\cr
1&\text{if}\ n=1,\cr
q^{2(n-1)/p-1}(q-1)&\text{if}\ n\equiv 1\pmod p,\ n>1,\cr
0&\text{otherwise}.
\end{cases}
\end{equation}
In \eqref{A(q,n)} and \eqref{B(q,n)}, $\phi$ is the Euler function.
\end{thm}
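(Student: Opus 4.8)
The plan is to read the formula \eqref{6.1} directly off Burnside's lemma \eqref{B-lem}, substituting the four fixed-point counts $\text{Fix}(A_a)$, $\text{Fix}(A_{\{a,b\}})$, $\text{Fix}(A_{\{\alpha,\alpha^q\}})$, $\text{Fix}(B_a)$ supplied by \eqref{fixAa}, Theorem~\ref{T2.1}, Theorem~\ref{T2.2}, and \eqref{FixBa}, and then re-indexing each of the four blocks. Two of the blocks are immediate. The scalar block has $q-1$ representatives $A_a$, each with $\text{Fix}(A_a)=q^{2(n-1)}$ and centralizer of order $q(q-1)^2(q+1)$, so it contributes $\frac{(q-1)q^{2(n-1)}}{q(q-1)^2(q+1)}=\frac{q^{2n-3}}{q^2-1}$, the first term of \eqref{6.1}. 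The Jordan block is just as direct: the value $\text{Fix}(B_a)$ in \eqref{FixBa} is independent of $a$, so summing over the $q-1$ classes $B_a$ (each with centralizer of order $q(q-1)$) gives $\frac1{q(q-1)}(q-1)\,\text{Fix}(B_a)=\frac1q\frak C(q,n)$.

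The two semisimple blocks require a re-indexing by the multiplicative order $d$ of the relevant ratio, which is where the weights $\phi(d)$ enter. For the split block, $\text{Fix}(A_{\{a,b\}})$ depends only on $d=o(a/b)$, so I would count, for each $d>1$ with $d\mid q-1$, the unordered pairs $\{a,b\}$ with $o(a/b)=d$: there are $\phi(d)$ ratios $c\in\f_q^*$ of order $d$, each arising from $q-1$ ordered pairs $(a,b)$ with $a/b=c$, hence $\frac{(q-1)\phi(d)}2$ unordered pairs. Dividing by $|\text{cent}(A_{\{a,b\}})|=(q-1)^2$ collapses the block to $\frac1{2(q-1)}\sum_{1<d\mid q-1}\phi(d)\,\text{Fix}(A_{\{a,b\}})$, and inserting the two cases of Theorem~\ref{T2.1} (according to whether $d\mid n$) produces exactly $\frak A(q,n)$. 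The non-split block is handled identically: the map $\alpha\mapsto\alpha^{q-1}$ is a homomorphism of $\f_{q^2}^*$ onto $\mu_{q+1}$ with kernel $\f_q^*$, so for each $d\mid q+1$ there are $\phi(d)(q-1)$ elements $\alpha$ with $o(\alpha^{q-1})=d$, and the Frobenius involution $\alpha\mapsto\alpha^q$ (which preserves $d$ since $(\alpha^q)^{q-1}=(\alpha^{q-1})^{-1}$) pairs them into $\frac{(q-1)\phi(d)}2$ classes $A_{\{\alpha,\alpha^q\}}$. Dividing by $|\text{cent}|=q^2-1$ yields $\frac1{2(q+1)}\sum_{1<d\mid q+1}\phi(d)\,\text{Fix}(A_{\{\alpha,\alpha^q\}})=\frac1{2(q+1)}\frak B(q,n)$.

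The main obstacle is the assembly and simplification of $\frak B(q,n)$. In contrast to the split case, $\text{Fix}(A_{\{\alpha,\alpha^q\}})$ was obtained in Section~4 by combining several pieces—$N(0,0)$, $N(0,d/2)$, and $N(r_1,0)$—whose presence and shape depend both on whether $d\mid n$ and on the parities of $d$ and of $n/d$ (respectively $\lfloor n/d\rfloor$). Consequently, substituting Theorem~\ref{T2.2} into $\frac1{2(q+1)}\sum\phi(d)\,\text{Fix}$ forces a split of the divisor range $\{d:d\mid q+1,\ d>1\}$ into the regimes ``$d\mid n$ with $d$ even,'' ``$d\mid n$ with $d$ odd,'' and ``$d\nmid n$,'' each emitting its own summand of $\frak B(q,n)$; the delicate point is tracking the $(-1)^{n/d}$ and $(-1)^{\lfloor n/d\rfloor}$ signs consistently across these regimes and across the even/odd dichotomy in $d$. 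Adding the four blocks then gives \eqref{6.1}. Beyond this bookkeeping I expect no conceptual difficulty, since every ingredient—the fixed-point counts of Theorems~\ref{T2.1} and \ref{T2.2} and the class-counting identity $\tfrac{(q-1)\phi(d)}2$—is already in hand.
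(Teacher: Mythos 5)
Your proposal is, in structure and in substance, the same proof the paper gives: Burnside's lemma over the four families of conjugacy classes, with the scalar and Jordan blocks read off immediately and the two semisimple blocks re-indexed by the order $d$ of $a/b$ (resp.\ of $\alpha^{q-1}$) with weight $\frac{(q-1)\phi(d)}2$ per divisor. Your class-counting is correct and in fact slightly more explicit than the paper's: for the split block the paper reaches the same weight by writing $\sum_{\{a,b\}}\text{Fix}(A_{\{a,b\}})=\frac{q-1}2\sum_{b\ne1}\text{Fix}(A_{\{b,1\}})$ and grouping $b$ by order, and for the nonsplit block it asserts the grouping without comment, whereas you supply the justification (surjectivity of $\alpha\mapsto\alpha^{q-1}$ onto $\mu_{q+1}$ with kernel $\f_q^*$, and the Frobenius pairing $(\alpha^q)^{q-1}=(\alpha^{q-1})^{-1}$ preserving $d$).

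One concrete warning about the step you dismiss as bookkeeping: substituting Theorem~\ref{T2.2} into $\frac1{2(q+1)}\sum_{1<d\,\mid\,q+1}\phi(d)\,\text{Fix}$ does \emph{not} reproduce \eqref{B(q,n)} verbatim, because Theorem~\ref{T2.2} supplies a single expression for each $d$ with $d\nmid n$, while \eqref{B(q,n)} as printed contains \emph{two} separate sums over $\{d:d\mid q+1,\ d\nmid n\}$. What those classes actually contribute is only the fourth sum of \eqref{B(q,n)}: by \eqref{case3} each has $\text{Fix}=\Theta_{\lfloor n/d\rfloor}=\frac1{1+q^2}\bigl[(-1)^{\lfloor n/d\rfloor}(1+q)+q^{2\lfloor n/d\rfloor+1}(q-1)\bigr]$, with the factor $(q-1)$ as in the \emph{proof} of Theorem~\ref{T2.2}, not the $(q+1)$ in that theorem's displayed statement; the third sum of \eqref{B(q,n)} is produced by no case at all. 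This is an inconsistency of the paper with itself rather than a flaw in your plan: the Section~7 evaluations use only the fourth sum (for instance $\frak B(q,1)=q$ there, whereas \eqref{B(q,n)} as printed gives $2q$, incompatible with $\frak N(q,1)=1$; likewise $\frak B(q,3)=q$ for $q$ even is what the independently confirmed value $\frak N(q,3)=2q$ for $q\equiv2\pmod6$ requires, and retaining the third sum even makes the Burnside average non-integral, e.g.\ at $q=8$, $n=3$). So if you execute your plan honestly you will land on \eqref{6.1} with the third sum of \eqref{B(q,n)} deleted and the $(q-1)$ version of the $d\nmid n$ count; a write-up claiming to arrive at the printed four-sum $\frak B(q,n)$ exactly would at that point be asserting something false, and the delicate sign-tracking you anticipate is not where the real difficulty lies.
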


\begin{proof}
We have
\begin{align*}
\frak N(q,n)=\,&\frac 1{q(q-1)^2(q+1)}\sum_{a\in\f_q^*}\text{Fix}(A_a)+\frac 1{(q-1)^2}\sum_{\substack{\{a,b\}\subset\f_q^*\cr a\ne b}}\text{Fix}(A_{\{a,b\}})\cr
&+\frac 1{q^2-1}\sum_{\{\alpha,\alpha^q\}\subset\f_{q^2}\setminus\f_q}\text{Fix}(A_{\{\alpha,\alpha^q\}})+\frac 1{q(q-1)}\sum_{a\in\f_q^*}\text{Fix}(B_a).
\end{align*}
We now compute the four sums in the above. 

\medskip
$1^\circ$ We have
\[
\sum_{a\in\f_q^*}\text{Fix}(A_a)=(q-1)q^{2(n-1)}.
\]

\medskip
$2^\circ$ We have
\begin{align*}
&\sum_{\substack{\{a,b\}\subset\f_q^*\cr a\ne b}}\text{Fix}(A_{\{a,b\}})=\frac 12\sum_{a\in\f_q^*}\sum_{b\in\f_q^*\setminus\{1\}}\text{Fix}(A_{\{ab,a\}})=\frac{q-1}2\sum_{b\in\f_q^*\setminus\{1\}}\text{Fix}(A_{\{b,1\}})\cr
&=\frac{q-1}2\Bigl[\sum_{\substack{1<d\,\mid\, q-1\cr d\,\mid\, n}}\phi(d)\Bigl(q^{2n/d-2}+\frac{(d-1)(q^{2n/d}-1)}{q+1}\Bigr)+\sum_{\substack{1<d\,\mid\, q-1\cr d\,\nmid\, n}}\phi(d)\frac{(q^{2\lfloor n/d\rfloor}+1)}{q+1}\Bigr]\cr
&\kern28em \text{(by Theorem~\ref{T2.1})}\cr
&=\frac{q-1}2\frak A(q,n).
\end{align*}

\medskip
$3^\circ$ By Theorem~\ref{T2.2}, 
\begin{align*}
&\sum_{\{\alpha,\alpha^q\}\subset\f_{q^2}\setminus\f_q}\text{Fix}(A_{\{\alpha,\alpha^q\}})\cr
&=\frac{q-1}2\Bigl[
\sum_{\substack{d\; \text{even}\cr d\,\mid\,\text{gcd}(q+1,n)}}\phi(d)\Bigl(q^{2n/d-2}+\frac{(q+1)(q^{2n/d}-(-1)^{n/d})}{q^2+1}\Bigr)\cr
&+\sum_{\substack{d\; \text{odd}\cr 1<d\,\mid\,\text{gcd}(q+1,n)}}\phi(d)q^{2n/d-2}\cr
&+\frac 1{q^2+1}\sum_{\substack{d\,\mid\, q+1\cr d\,\nmid\, n}}\phi(d)\bigl((-1)^{\lfloor n/d\rfloor}(1+q)+q^{2\lfloor n/d\rfloor+1}(q-1)\bigr)
\Big]\cr
&=\frac{q-1}2\frak B(q,n).
\end{align*}

\medskip
$4^\circ$ By \eqref{FixBa},
\[
\sum_{a\in\f_q^*}\text{Fix}(B_a)=(q-1)\frak C(q,n).
\]
\end{proof}


\section{$\frak N(q,n)$ for Small n}

\subsection{$n=1$}\

We have
\[
\frak A(q,1)=\sum_{1<d\,\mid\, q-1}\phi(d)=q-2,
\]
\[
\frak B(q,1)=\frac 1{q^2+1}\sum_{1<d\,\mid\, q+1}\phi(d)\bigl[(1+q)+q(q-1)\bigr]=\sum_{1<d\,\mid\, q+1}\phi(d)=q+1-1=q,
\]
\[
\frak C(q,1)=1.
\]
Hence
\[
\frak N(q,1)=\frac{q^{-1}}{q^2-1}+\frac 1{2(q-1)}(q-2)+\frac 1{2(q+1)}q+\frac 1q=1,
\]
as expected.

\subsection{$n=2$}\ 

{\bf Case 1.} Assume $q$ is even. We have
\[
\frak A(q,2)=\sum_{1<d\,\mid\, q-1}\phi(d)=q-2,
\]
\[
\frak B(q,2)=\frac 1{q^2+1}\sum_{\substack{1<d\,\mid\, q+1\cr d\,\nmid\, 2}}\phi(d)\bigl[(1+q)+q(q-1)\bigr]=\sum_{1<d\,\mid\, q+1}\phi(d)=q+1-1=q,
\]
\[
\frak C(q,2)=q.
\]
Hence
\[
\frak N(q,2)=\frac q{q^2-1}+\frac 1{2(q-1)}(q-2)+\frac 1{2(q+1)}q+\frac 1q q=2.
\]

Since $X^2$ and $X^2+X$ are nonequivalent ($X^2$ is a permutation of $\Bbb P^1(\f_q)$ but $X^2+X$ is not), 
\[
X^2,\ X^2+X
\]
is a list of representatives of the equivalence classes of rational functions of degree 2 over $\f_q$.

\medskip
{\bf Case 2.} Assume $q$ is odd. We have
\[
\frak A(q,2)=\phi(2)\Bigl(1+\frac{q^2-1}{q+1}\Bigr)+\sum_{2<d\,\mid\, q-1}\phi(d)=q+q-1-2=2q-3,
\]
\begin{align*}
\frak B(q,2)=\,&\phi(2)\Bigl(1+\frac{(q+1)(q^2+1)}{q^2+1}\Bigr)+\frac 1{q^2+1}\sum_{\substack{d\,\mid\, q+1\cr d\,\nmid\, 2}}\phi(d)\bigl((1+q)+q(q-1)\bigr)\cr
=\,&q+2+\sum_{2<d\,\mid\, q+1}\phi(d)=q+2+q+1-2=2q+1,
\end{align*}
\[
\frak C(q,2)=0.
\]
Hence
\[
\frak N(q,2)=\frac q{q^2-1}+\frac 1{2(q-1)}(2q-3)+\frac 1{2(q+1)}(2q+1)=2.
\]

In this case, a list of representatives of the equivalence classes of rational functions of degree 2 over $\f_q$ is given by 
\[
X^2,\ \frac{X^2+b}X,
\]
where $b$ is any fixed nonsquare of $\f_q$. 

\begin{proof}
It suffices to show that every $f\in\f_q(X)$ of degree 2 is equivalent to one of the above two rational functions.

If $f$ is a polynomial, then $f\sim X^2$.

If $f$ is not a polynomial, then $f\sim(X^2+aX+b)/X$, where $b\in\f_q^*$. Thus $f\sim(X^2+b)/X$. If $b=c^2$ for some $c\in\f_q^*$, then 
\begin{align*}
f\,&\sim\frac{X^2+2cX+c^2}X=\frac{(X+c)^2}X\sim\frac X{(X+c)^2}\sim\frac{X-c}{X^2}= \frac 1X-c\Bigl(\frac 1X\Bigr)^2\cr
&\sim X-cX^2\sim X^2.
\end{align*}
\end{proof} 

\subsection{$n=3$}\

$1^\circ$ Computing $\frak A(q,3)$.

\medskip
First assume $q$ is even.

If $q-1\equiv 0\pmod 3$, 
\begin{align*}
\frak A(q,3)\,&=\phi(3)\Bigl(1+\frac{2(q^2-1)}{q+1}\Bigr)+\sum_{\substack{1<d\,\mid\, q-1\cr d\,\nmid\, 3}}\phi(d)\cr
&=2(1+2(q-1))+q-1-\phi(1)-\phi(3)\cr
&=2(2q-1)+q-1-3=5q-6.
\end{align*}

If $q-1\not\equiv 0\pmod 3$,
\[
\frak A(q,3)=\sum_{\substack{1<d\,\mid\, q-1\cr d\,\nmid\, 3}}\phi(d)=q-1-\phi(1)=q-2.
\]

\medskip
Next, assume $q$ is odd.

If $q-1\equiv 0\pmod 3$,
\begin{align*}
\frak A(q,3)\,&=\phi(3)\Bigl(1+\frac {2(q^2-1)}{q+1}\Bigr)+\phi(2)\frac{q^3+1}{q+1}+\sum_{3<d\,\mid\, q-1}\phi(d)\cr
&=2(1+2(q-1))+q^2-q+1+q-1-\phi(1)-\phi(2)-\phi(3)\cr
&=2(2q-1)+q^2-4=q^2+4q-6.
\end{align*}

If $q-1\not\equiv 0\pmod 3$,
\[
\frak A(q,3)=\phi(2)\frac{q^3+1}{q+1}+\sum_{3<d\,\mid\, q-1}\phi(d)=q^2-q+1+q-1-\phi(1)-\phi(2)=q^2-2.
\]

To summarize,
\[
\frak A(q,3)=\begin{cases}
5q-6&\text{if}\ q\equiv 4\pmod 6,\cr
q-2&\text{if}\ q\equiv 2\pmod 6,\cr
q^2+4q-6&\text{if}\ q\equiv 1\pmod 6,\cr
q^2-2&\text{if}\ q\equiv 3,5\pmod 6.
\end{cases}
\]

\medskip
$2^\circ$ Computing $\frak B(q,3)$.

First assume $q$ is even.

If $q+1\equiv 0\pmod 3$,
\begin{align*}
\frak B(q,3)\,&=\phi(3)+\frac 1{q^2+1}\sum_{\substack{d\,\mid\, q+1\cr d\,\nmid\, 3}}\phi(d)\bigl[(1+q)+q(q-1)\bigr]\cr
&=2+\sum_{\substack{d\,\mid\, q+1\cr d\,\nmid\, 3}}\phi(d)=2+q+1-\phi(1)-\phi(3)=q.
\end{align*}

If $q+1\not\equiv 0\pmod 3$,
\[
\frak B(q,3)=\frac 1{q^2+1}\sum_{\substack{d\,\mid\, q+1\cr d\,\nmid\, 3}}\phi(d)\bigl[(1+q)+q(q-1)\bigr]=\sum_{\substack{d\,\mid\, q+1\cr d\,\nmid\, 3}}\phi(d)=q+1-\phi(1)=q.
\]

\medskip
Next, assume $q$ is odd.

If $q+1\equiv 0\pmod 3$,
\begin{align*}
\frak B(q,3)\,&=\phi(3)+\frac 1{q^2+1}\Bigl[\phi(2)(-(1+q)+q^3(q-1))+\sum_{3<d\,\mid\, q+1}\phi(d)(1+q+q(q-1))\Bigr]\cr
&=2+\frac 1{q^2+1}\Bigl[q^4-q^3-q-1+(q^2+1)\sum_{3<d\,\mid\, q+1}\phi(d)\Bigr]\cr
&=2+\frac 1{q^2+1}\bigl[(q^2+1)(q^2-q-1)+(q^2+1)(q+1-\phi(1)-\phi(2)-\phi(3))\bigr]\cr
&=2+q^2-q-1+q+1-4=q^2-2.
\end{align*}

If $q+1\not\equiv 0\pmod 3$,
\begin{align*}
\frak B(q,3)\,&=\frac 1{q^2+1}\Bigl[\phi(2)(-(1+q)+q^3(q-1))+\sum_{3<d\,\mid\, q+1}\phi(d)((1+q)+q(q-1))\Bigr]\cr
&=\frac 1{q^2+1}\bigl[(q^2+1)(q^2-q-1)+(q^2+1)(q+1-\phi(1)-\phi(2))\bigr]\cr
&=q^2-q-1+q+1-2=q^2-2.
\end{align*}

To summarize,
\[
\frak B(q,3)=\begin{cases}
q&\text{if $q$ is even},\cr
q^2-2&\text{if $q$ is odd}.
\end{cases}
\]

\medskip
$3^\circ$ Computing $\frak C(q,3)$. We have
\[
\frak C(q,3)=\begin{cases}
q(q-1)&\text{if}\ p=2,\cr
q&\text{if}\ p=3,\cr
0&\text{otherwise}.
\end{cases}
\]

\medskip
$4^\circ$ Computing $\frak N(q,3)$.

\medskip
If $q\equiv 1\pmod 6$,
\[
\frak N(q,3)=\frac{q^3}{q^2-1}+\frac 1{2(q-1)}(q^2+4q-6)+\frac 1{2(q+1)}(q^2-2)=2(q+1).
\]

\medskip
If $q\equiv 2\pmod 6$,
\[
\frak N(q,3)=\frac{q^3}{q^2-1}+\frac 1{2(q-1)}(q-2)+\frac 1{2(q+1)}q+\frac 1qq(q-1)=2q.
\]

\medskip
If $q\equiv 3\pmod 6$, i.e., $p=3$,
\[
\frak N(q,3)=\frac{q^3}{q^2-1}+\frac 1{2(q-1)}(q^2-2)+\frac 1{2(q+1)}(q^2-2)+\frac 1qq=2q+1.
\]

\medskip
If $q\equiv 4\pmod 6$,
\[
\frak N(q,3)=\frac{q^3}{q^2-1}+\frac 1{2(q-1)}(5q-6)+\frac 1{2(q+1)}q+\frac 1qq(q-1)=2(q+1).
\]

\medskip
If $q\equiv 5\pmod 6$,
\[
\frak N(q,3)=\frac{q^3}{q^2-1}+\frac 1{2(q-1)}(q^2-2)+\frac 1{2(q+1)}(q^2-2)=2q.
\]

\medskip
To summarize,
\[
\frak N(q,3)=\begin{cases}
2(q+1)&\text{if}\ q\equiv 1,4\pmod 6,\cr
2q&\text{if}\ q\equiv 2,5\pmod 6,\cr
2q+1&\text{if}\ q\equiv 3\pmod 6.
\end{cases}
\]

As mentioned in Section~1, rational functions of degree $3$ in $\f_q(X)$ have been classified for even $n$ \cite{Mattarei-Pizzato-arXiv2104.00111}; for odd $q$, the question is still open.


\subsection{$n=4$}\

We include the formulas for $\frak A(q,4)$, $\frak B(q,4)$, $\frak C(q,4)$ and $\frak N(q,4)$ but omit the details of the computations.
\[
\frak A(q,4)=\begin{cases}
-2-q+2q^2&\text{if}\ q\equiv 4,10\pmod{12},\cr
-2+q&\text{if}\ q\equiv 2,8\pmod{12},\cr
-10+6q+2q^2+q^3&\text{if}\ q\equiv 1\pmod{12},\cr
-10+8q+q^3&\text{if}\ q\equiv 5,9\pmod{12},\cr
-4+2q^2+q^3&\text{if}\ q\equiv 7\pmod{12},\cr
-4+2q+q^3&\text{if}\ q\equiv 3,11\pmod{12}.
\end{cases}
\]

\[
\frak B(q,4)=\begin{cases}
-2+q^2&\text{if}\ q\equiv 2,8\pmod{12},\cr
q&\text{if}\ q\equiv 4\pmod{12},\cr
-4+4q^2+q^3&\text{if}\ q\equiv 11\pmod{12},\cr
2q+2q^2+q^3&\text{if}\ q\equiv 3,7\pmod{12},\cr
-6-2q+4q^2+q^3&\text{if}\ q\equiv 5\pmod{12},\cr
-2+2q^2+q^3&\text{if}\ q\equiv 1,9\pmod{12}.
\end{cases}
\]

\[
\frak C(q,4)=\begin{cases}
q^3&\text{if}\ p=2,\cr
q(q-1)&\text{if}\ p=3,\cr
0&\text{otherwise}.
\end{cases}
\]

\[
\frak N(q,4)=\begin{cases}
4+3q+q^2+q^3&\text{if}\ q\equiv 1\pmod{12},\vspace{0.2em}\cr
\displaystyle\frac 32q+q^2+q^3&\text{if}\ q\equiv 2,8\pmod{12},\vspace{0.2em}\cr
1+3q+q^2+q^3&\text{if}\ q\equiv 3\pmod{12},\cr
1+2q+q^2+q^3&\text{if}\ q\equiv 4\pmod{12},\cr
2+3q+q^2+q^3&\text{if}\ q\equiv 5,7\pmod{12},\cr
3+3q+q^2+q^3&\text{if}\ q\equiv 9\pmod{12},\cr
3q+q^2+q^3&\text{if}\ q\equiv 11\pmod{12}.
\end{cases}
\]


\section{Equivalence Classes of Polynomials}

\begin{lem}\label{L8.1}
Let $f,g\in\f_q[X]\setminus\f_q$. Then $g=\phi\circ f\circ \psi$ for some $\phi,\psi\in G(\f_q)$ if and only if $g=\alpha\circ f\circ\beta$ for some $\alpha,\beta\in\text{\rm AGL}(1,\f_q)$.
\end{lem}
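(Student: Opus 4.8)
The forward direction is immediate: since $\text{AGL}(1,\f_q)$ consists of the degree-one polynomials, which are precisely the elements of $G(\f_q)$ fixing $\infty$, we have $\text{AGL}(1,\f_q)\subseteq G(\f_q)$, so any decomposition $g=\alpha\circ f\circ\beta$ with $\alpha,\beta\in\text{AGL}(1,\f_q)$ already has the required form. The plan is to prove the reverse implication. The guiding principle is that a rational function $h$ of degree $n\ge 1$ is a polynomial if and only if its only pole is $\infty$, i.e. $h^{-1}(\infty)=\{\infty\}$ on $\Bbb P^1(\overline{\f_q})$; equivalently, $\text{AGL}(1,\f_q)$ is exactly the stabilizer of $\infty$ in $G(\f_q)$. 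Thus it suffices to show that $\phi$ and $\psi$ may be replaced by maps fixing $\infty$.

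First I would extract the pole information. Assume $n\ge 2$ (for $n=1$ both $f,g$ are affine and the claim is trivial). Writing $y_0=\phi^{-1}(\infty)$ and $x_1=\psi(\infty)$, both lying in $\Bbb P^1(\f_q)$, the identity $\{\infty\}=g^{-1}(\infty)=\psi^{-1}\big(f^{-1}(\phi^{-1}(\infty))\big)$ forces $f^{-1}(y_0)=\{x_1\}$, a single point. If $y_0=\infty$, then since $f$ is a polynomial we have $f^{-1}(\infty)=\{\infty\}$, so $x_1=\infty$; hence $\psi$ fixes $\infty$ and $\phi$ fixes $\infty$, both maps are affine, and we are done.

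The substantive case is $y_0\ne\infty$. Then $x_1\ne\infty$ (because $f(\infty)=\infty\ne y_0$), so $f(X)-y_0$ has the single root $x_1$ with multiplicity $n$, giving $f(X)=y_0+c(X-x_1)^n$ for some $c\in\f_q^*$. Hence $f=L_1\circ X^n\circ L_2$ with $L_1(X)=cX+y_0$ and $L_2(X)=X-x_1$ affine, and the problem reduces to the monomial case: if $g=\Phi\circ X^n\circ\Psi$ with $\Phi,\Psi\in G(\f_q)$ and $g$ a polynomial, then $g=\alpha_0\circ X^n\circ\beta_0$ with $\alpha_0,\beta_0$ affine; substituting $X^n=L_1^{-1}\circ f\circ L_2^{-1}$ then produces the desired affine decomposition of $g$ in terms of $f$. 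For the monomial case I would again read off $\Phi^{-1}(\infty)$: over $\overline{\f_q}$ the only values of $X^n$ with a single preimage are $0$ and $\infty$, unless $n$ is a power of $p$. If $\Phi^{-1}(\infty)=\infty$ both maps are affine; if $\Phi^{-1}(\infty)=0$ I would conjugate by the inversion $\iota(X)=1/X$, which satisfies $\iota\circ X^n\circ\iota=X^n$, to move the distinguished point back to $\infty$.

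The main obstacle is the purely inseparable case $n=p^k$, where $X^n$ has a single preimage over \emph{every} point, so $\Phi$ and $\Psi$ need not individually stabilize $\infty$. Here I would exploit that $X^{p^k}$ commutes with every M\"obius map up to a Frobenius twist: for $\Psi(X)=(aX+b)/(cX+d)$ one has $\Psi(X)^{p^k}=(a^{p^k}X^{p^k}+b^{p^k})/(c^{p^k}X^{p^k}+d^{p^k})=\lambda(X^{p^k})$ for some $\lambda\in G(\f_q)$, whence $X^n\circ\Psi=\lambda\circ X^n$ and $g=(\Phi\circ\lambda)\circ X^n$. Since $g$ and $X^n$ are polynomials, $\Phi\circ\lambda$ must fix $\infty$ and is therefore affine. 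I expect this inseparable subtlety, together with the bookkeeping needed to translate the monomial decomposition back to $f$, to be the only genuinely delicate point; the remaining steps are routine once the pole/preimage dictionary is in place.
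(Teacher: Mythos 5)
Your proposal is correct, but it takes a genuinely different route from the paper's. The paper argues by direct computation: writing $\psi=A/B$, the case $B$ constant is immediate, and when $B=X+d$ is nonconstant it clears denominators in $g=\phi\circ f\circ\psi$ and imposes that $g$ be a polynomial, which forces the denominator $u\bigl(A^n+a_{n-1}A^{n-1}B+\cdots+a_0B^n\bigr)+vB^n$ to be the constant $1$; from that single identity the paper reads off both $g=su^{-1}+(t-su^{-1}v)(X+d)^n$ and $f=u^{-1}(b-ad)^{-n}(X-a)^n-u^{-1}v$, i.e., both are affine twists of $X^n$, and the conclusion is immediate. You arrive at the same structural dichotomy --- either $\phi,\psi$ already stabilize $\infty$, or $f$ is an affine twist of $X^n$ --- but geometrically, from the fibre identity $f^{-1}(\phi^{-1}(\infty))=\{\psi(\infty)\}$ on $\Bbb P^1(\overline{\f_q})$, which forces $f-y_0=c(X-x_1)^n$ when $y_0=\phi^{-1}(\infty)\neq\infty$. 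What your route buys: it explains \emph{why} $X^n$ is the exceptional shape (the unique polynomial with a totally ramified finite fibre), and it isolates precisely where characteristic matters, namely $n=p^k$, which you resolve correctly via the Frobenius twist $X^{p^k}\circ\Psi=\lambda\circ X^{p^k}$ (and $\lambda\in G(\f_q)$ indeed, since Frobenius preserves $\f_q$ and $(ad-bc)^{p^k}\neq 0$). What the paper's route buys: it is shorter and uniform in the characteristic --- the coefficient computation never passes to $\overline{\f_q}$, needs no preimage counts, and your three-way monomial analysis (stabilizer of $\infty$, conjugation by $\iota(X)=1/X$, Frobenius twist) collapses into one manipulation. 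If you write your sketch up, make two small steps explicit: in the separable monomial case $n=p^km$ with $m>1$, generic fibres of $X^n$ have exactly $m$ points, so $\Phi^{-1}(\infty)\in\{0,\infty\}$; and after each normalization of the left-hand map, the affineness of the right-hand map $\Psi$ follows from the same pole argument, $\Psi^{-1}(\{\infty\})=g^{-1}(\infty)=\{\infty\}$.
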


\begin{proof}
($\Rightarrow$) Let $\psi(X)=A(X)/B(X)$. 

\medskip
{\bf Case 1.} Assume that $B(X)=1$. Then $\psi=A\in\text{AGL}(1,\f_q)$. Since $f\circ A=f(A(X))\in\f_q[X]$ and $\phi\circ f\circ A\in\f_q[X]$, it follows that $\phi\in \text{AGL}(1,\f_q)$.

\medskip
{\bf Case 2.} Assume that $B(X)\notin\f_q$. Let $B(X)=X+d$ and $A(X)=aX+b$. Let $f(X)=X^n+a_{n-1}X^{n-1}+\cdots+a_0$. Then 
\[
f(\phi(X))=\frac{A(X)^n+a_{n-1}A(X)^{n-1}B(X)+\cdots+a_0B(X)^n}{B(X)^n}.
\]
Let $\phi(X)=(sX+t)/(uX+v)$. Then
\begin{equation}\label{u...=1}
u\bigl(A(X)^n+a_{n-1}A(X)^{n-1}B(X)+\cdots+a_0B(X)^n \bigr)+vB(X)^n=1
\end{equation}
and 
\[
g(X)=s\bigl(A(X)^n+a_{n-1}A(X)^{n-1}B(X)+\cdots+a_0B(X)^n \bigr)+tB(X)^n.
\]
By \eqref{u...=1}, $u\ne 0$ and
\[
g(X)=su^{-1}(1-vB(X)^n)+tB(X)^n=su^{-1}+(t-su^{-1}v)B(X)^n.
\]
Hence we may assume $g(X)=X^n$. By \eqref{u...=1} again,
\begin{align*}
uf\Bigl(\frac{A(X)}{B(X)}\Bigr)+v\,&=\frac 1{B(X)^n}=\Bigl(\frac 1{X+d}\Bigr)^n=\Bigl(\frac{AX+b}{X+d}-a\Bigr)^n(b-ad)^{-n}\cr
&=\Bigl(\frac{A(X)}{B(X)}-a\Bigr)^n(b-ad)^{-n}.
\end{align*}
So $f(X)=u^{-1}(b-ad)^{-n}(X-a)^n-u^{-1}v$. Hence we may assume $f(X)=X^n$. Then $f=g$.
\end{proof}

Because of Lemma~\ref{L8.1}, we define
two polynomials $f,g\in\f_q[X]\setminus\f_q$ to be {\em equivalent} if there exist $\alpha,\beta\in\text{AGL}(1,\f_q)$ such that $g=\alpha\circ f\circ \beta$; the meaning of equivalence between $f$ and $g$ is the same whether they are treated as polynomials or as rational functions. 

Let
\[
\mathcal P_{q,n}=\{f\in\f_q[X]:\deg f=n\}
\]
and let $\frak M(q,n)$ denote the number of equivalence classes in $\mathcal P_{q,n}$. Compared with $\frak N(q,n)$, $\frak M(q,n)$ is much easier to determine.

For $f,g\in\f_q[X]\setminus\f_q$, define $f\overset L\sim g$ if there exists $\alpha\in\text{AGL}(1,\f_q)$ such that $g=\alpha\circ f$. Let $[f]$ denote the $\overset L\sim$ equivalence class of $f$. Each $\overset L\sim$ equivalence class has a unique representative $X^n+a_{n-1}X^{n-1}+\cdots+a_1X$. Let $\text{AGL}(1,\f_q)$ act on the set of $\overset L\sim$ equivalence classes in $\f_q[X]\setminus\f_q$ as follows: For $f\in\f_q[X]\setminus\f_q$ and $\alpha\in\text{AGL}(1,\f_q)$, $[f]^\alpha=[f\circ\alpha]$. Then $\frak M(q,n)$ is the number of $\text{AGL}(1,\f_q)$-orbits in $\Omega_n:=\{[f]:f\in\mathcal P_{q,n}\}$. The information about the conjugacy classes of $\text{AGL}(1,\f_q)$ is given in Table~\ref{Tb3}. For $\alpha\in\text{AGL}(1,\f_q)$, let 
$\text{Fix}(\alpha)$ be the number of elements in $\Omega_n$ fixed by $\alpha$. All we have to do is to determine $\text{Fix}(\alpha)$ for each representative $\alpha$ in Table~\ref{Tb3}.

\begin{table}[h]
\caption{Conjugacy classes of $\text{AGL}(1,\f_q)$}\label{Tb3}
   \renewcommand*{\arraystretch}{1.2}
    \centering
     \begin{tabular}{c|c}
     \hline
     	 representative & size of the centralizer \\ \hline      
         $X$ & $q(q-1)$  \\     
         $aX,\ a\in\f_q^*,\ a\ne 1$ & $q-1$  \\
         $X+1$ & $q$  \\
         \hline
    \end{tabular}
\end{table}

Clearly,
\begin{equation}\label{FixX}
\text{Fix}(X)=q^{n-1}.
\end{equation}

\medskip
Next, we compute $\text{Fix}(aX)$, where $a\in\f_q^*$, $a\ne 1$. Let $o(a)=d$. Then $[f]\in\Omega_n$ is fixed by $aX$ if and only if 
\[
f\overset L\sim X^rh(X^d),
\]
where $0\le r<d$, $n\equiv r\pmod d$, $h\in\f_q[X]$ is monic of degree $(n-r)/d$, and $h(0)=0$ if $r=0$. Thus
\begin{align}\label{FixaX}
\text{Fix}(aX)\,&=\begin{cases}
q^{n/d-1}&\text{if}\ d\mid n\cr
q^{\lfloor n/d\rfloor}&\text{if}\ d\nmid n
\end{cases}\\
&=q^{\lceil n/d\rceil-1}.\nonumber
\end{align}

\medskip
Now we comput $\text{Fix}(X+1)$. For $[f]\in\Omega_n$,
\[
\begin{array}{cl}
& \text{$[f]$ is fixed by $X+1$}\vspace{0.2em}\cr
\Leftrightarrow & f(X+1)=f(X)+a,\ \text{where}\ a\in\f_q \vspace{0.2em}\cr
\Leftrightarrow & f(X)=g(X)+aX,\ \text{where}\  a\in\f_q,\ g\in\f_q[X],\ \Delta g=0 \vspace{0.2em}\cr
\Leftrightarrow & f(X)=h(X^p-X)+aX,\ \text{where}\ a\in\f_q,\ h\in\f_q[X],\ p=\text{char}\,\f_q.
\end{array}
\]
In the above, we may assume that $f$ is monic and $f(0)=0$. Therefore,
when $p\mid n$, $h$ is of degree $n/p$ with $h(0)=0$; when $p\nmid n$, $h=0$, $n=1$ and $a=1$. So,
\begin{equation}\label{FixX+1}
\text{Fix}(X+1)=\begin{cases}
q^{n/p-1}\cdot q=q^{n/p}&\text{if}\ p\mid n,\cr
1&\text{if}\ n=1,\cr
0&\text{if $p\nmid n$ and $n>1$}.
\end{cases}
\end{equation}

\begin{thm}\label{T3.1}
Let $p=\text{\rm char}\,\f_q$. We have
\[
\frak M(q,n)=\frac{q^{n-2}}{q-1}+\frac 1{q-1}\sum_{1<d\,\mid\, q-1}\phi(d)q^{\lceil n/d\rceil-1}+\begin{cases}
q^{n/p-1}&\text{if}\ p\mid n,\cr
q^{-1}&\text{if}\ n=1,\cr
0&\text{if $p\nmid n$ and $n>1$}.
\end{cases}
\]
\end{thm}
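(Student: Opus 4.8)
The plan is to apply Burnside's lemma to the action of $\text{AGL}(1,\f_q)$ on $\Omega_n$ set up above, feeding in the three fixed-point counts \eqref{FixX}, \eqref{FixaX}, and \eqref{FixX+1} that have already been established. Since the number of elements of $\Omega_n$ fixed by a group element depends only on its conjugacy class, Burnside's lemma takes the form
\[
\frak M(q,n)=\sum_C\frac{\text{Fix}(\alpha_C)}{|\text{cent}(\alpha_C)|},
\]
where $C$ ranges over the conjugacy classes of $\text{AGL}(1,\f_q)$, $\alpha_C$ is a representative, and I have used $|C|=|\text{AGL}(1,\f_q)|/|\text{cent}(\alpha_C)|$. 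Reading the representatives and centralizer orders off Table~\ref{Tb3}, this splits into a single term from the identity $X$, a family of terms from the scalings $aX$ with $a\neq1$, and a single term from the translation $X+1$.

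First I would record the identity contribution: by \eqref{FixX} we have $\text{Fix}(X)=q^{n-1}$, and the centralizer has order $q(q-1)$, so this term is $q^{n-1}/\bigl(q(q-1)\bigr)=q^{n-2}/(q-1)$, matching the leading term of the asserted formula. Next I would assemble the scaling contributions. For $a\in\f_q^*$ with $a\neq1$ the centralizer has order $q-1$, so the total contribution is $(q-1)^{-1}\sum_{a\neq1}\text{Fix}(aX)$, and by \eqref{FixaX} we have $\text{Fix}(aX)=q^{\lceil n/o(a)\rceil-1}$, which depends on $a$ only through $d:=o(a)$.

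The one step requiring a little care is the regrouping of this sum by multiplicative order. Since $\f_q^*$ is cyclic of order $q-1$, it contains an element of order $d$ precisely when $d\mid q-1$, and in that case the number of such elements is $\phi(d)$; discarding $a=1$ simply removes the unique element of order $1$. Hence
\[
\sum_{a\neq1}\text{Fix}(aX)=\sum_{1<d\,\mid\, q-1}\phi(d)\,q^{\lceil n/d\rceil-1},
\]
which gives the second term of the formula after dividing by $q-1$; the point to remember is that values of $d$ not dividing $q-1$ contribute nothing.

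Finally the translation $X+1$ has centralizer of order $q$, so its contribution is $\text{Fix}(X+1)/q$, which by \eqref{FixX+1} equals $q^{n/p-1}$ when $p\mid n$, equals $q^{-1}$ when $n=1$, and vanishes when $p\nmid n$ and $n>1$ — exactly the case distinction in the statement. Adding the three contributions yields the claimed formula. There is no substantial obstacle, since the three fixed-point counts are already in hand; the entire proof is the bookkeeping of Burnside's lemma together with the order-counting in the scaling sum just described.
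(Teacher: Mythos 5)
Your proof is correct and is essentially identical to the paper's: both apply Burnside's lemma over the three conjugacy-class types of $\text{AGL}(1,\f_q)$ from Table~\ref{Tb3}, insert the fixed-point counts \eqref{FixX}--\eqref{FixX+1}, and regroup the scaling sum by multiplicative order $d\mid q-1$ with $\phi(d)$ elements per order. No further comment is needed.
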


\begin{proof}
By Burnside's lemma and \eqref{FixX} -- \eqref{FixX+1},
\begin{align*}
\frak M(q,n)\,&=\frac 1{q(q-1)}\text{Fix}(X)+\frac 1{q-1}\sum_{a\in\f_q^*\setminus\{1\}}\text{Fix}(aX)+\frac 1q\text{Fix}(X+1)\cr
&=\frac{q^{n-2}}{q-1}+\frac 1{q-1}\sum_{1<d\,\mid\, q-1}\phi(d)q^{\lceil n/d\rceil-1}+\frac 1q\text{Fix}(X+1),
\end{align*}
where
\[
\frac 1q\text{Fix}(X+1)=\begin{cases}
q^{n/p-1}&\text{if}\ p\mid n,\cr
q^{-1}&\text{if}\ n=1,\cr
0&\text{if $p\nmid n$ and $n>1$}.
\end{cases}
\]
\end{proof}

In Theorem~\ref{T3.1}, we can write 
\begin{align*}
&\frac{q^{n-2}}{q-1}+\frac 1{q-1}\sum_{1<d\,\mid\, q-1}\phi(d)q^{\lceil n/d\rceil-1}\cr
=\,&\frac{q^{n-2}}{q-1}+\frac 1{q-1}\Bigl(\sum_{d\,\mid\, q-1}\phi(d)q^{\lceil n/d\rceil-1}-q^{n-1}\Bigr)\cr
=\,&\frac 1{q-1}\sum_{d\,\mid\, q-1}\phi(d)q^{\lceil n/d\rceil-1}+\frac{q^{n-2}-q^{n-1}}{q-1}\cr
=\,&\frac 1{q-1}\Bigl(\sum_{d\,\mid\, q-1}\phi(d)(q^{\lceil n/d\rceil-1}-1)+\sum_{d\,\mid\, q-1}\phi(d)\Bigr)-q^{n-2}\cr
=\,&\frac 1{q-1}\sum_{\substack{d\,\mid\, q-1\cr d<n}}\phi(d)(q^{\lceil n/d\rceil-1}-1)+1-q^{n-2}.
\end{align*}
Hence
\[
\frak M(q,n)=\frac 1{q-1}\sum_{\substack{d\,\mid\, q-1\cr d<n}}\phi(d)(q^{\lceil n/d\rceil-1}-1)+\begin{cases}
1-q^{n-2}+q^{n/p-1}&\text{if}\ p\mid n,\cr
1&\text{if}\ n=1,\cr
1-q^{n-2}&\text{if $p\nmid n$ and $n>1$}.
\end{cases}
\]
In the above, the sum
\[
\frac 1{q-1}\sum_{\substack{d\,\mid\, q-1\cr d<n}}\phi(d)(q^{\lceil n/d\rceil-1}-1)
\]
can be made more explicit as follows: Write
\[
\text{lcm}\{1,2,\dots,n-1\}=\prod_{\text{$r$ prime}}r^{\nu_r},\qquad \nu_r=\lfloor\log_r(n-1)\rfloor,
\]
and
\[
\text{gcd}(\text{lcm}\{1,2,\dots,n-1\},q-1)=\prod_{\text{$r$ prime}}r^{u_r}. 
\]
Then
\begin{align*}
&\frac 1{q-1}\sum_{\substack{d\,\mid\, q-1\cr d<n}}\phi(d)(q^{\lceil n/d\rceil-1}-1)\cr
=\,&\sum_{\substack{e_r\le u_r\cr \prod_r r^{e_r}\le n-1}}\phi\Bigl(\prod_r r^{e_r}\Bigr)(q^{\lceil n/\prod_r r^{e_r}\rceil-1}-1)\cr
=\,&\sum_{\substack{e_r\le u_r\cr \prod_r r^{e_r}\le n-1}}\Bigl(\prod_r r^{e_r}\Bigr)\Bigl(\prod_{r: e_r>0}(1-r^{-1})\Bigr)(q^{\lceil n/\prod_r r^{e_r}\rceil-1}-1).
\end{align*}

As concrete examples, we include the formulas for $\frak M(q,n)$ with $1\le n\le 5$.
\[
\frak M(q,1)=1.
\]
\[
\frak M(q,2)=\begin{cases}
2&\text{if}\ p=2,\cr
1&\text{if}\ p>2.
\end{cases}
\]
\[
\frak M(q,3)=\begin{cases}
2&\text{if}\ p=2,\cr
4&\text{if}\ p=3,\cr
3&\text{if}\ p>3.
\end{cases}
\]
\[
\frak M(q,4)=\begin{cases}
q+5&\text{if}\ q\equiv 1\pmod 6,\cr
2q+2&\text{if}\ q\equiv 2\pmod 6,\cr
q+3&\text{if}\ q\equiv 3,5\pmod 6,\cr
2q+4&\text{if}\ q\equiv 4\pmod 6.
\end{cases}
\]
\[
\frak M(q,5)=\begin{cases}
q^2+2q+8&\text{if}\ q\equiv 1\pmod{12}\ \text{and}\ p=5,\cr
q^2+2q+7&\text{if}\ q\equiv 1\pmod{12}\ \text{and}\ p\ne 5,\cr
q^2+q+2&\text{if}\ q\equiv 2,8\pmod{12},\cr
q^2+2q+3&\text{if}\ q\equiv 3,11\pmod{12},\cr
q^2+q+4&\text{if}\ q\equiv 4\pmod{12},\cr
q^2+2q+6&\text{if}\ q\equiv 5\pmod{12}\ \text{and}\ p=5,\cr
q^2+2q+5&\text{if}\ q\equiv 5,7,9\pmod{12}\ \text{and}\ p\ne 5.
\end{cases}
\]

With $\frak M(q,n)$ known, it is not difficult to classify polynomials of low degree over $\f_q$. Tables~\ref{Tb-q1} -- \ref{Tb-q5} give the representatives of the equivalence classes in $\mathcal P_{q,n}$ for $1\le n\le 5$. In each of these cases, it is easy to verify that every $f\in\mathcal P_{q,n}$ is equivalent to one of the representatives, and since their total number equals $\frak M(q,n)$, the representatives are pairwise nonequivalent. In these tables, $\mathcal C_i$ denotes a system of representatives of the cosets of $\{x^i:x\in\f_q^*\}$ in $\f_q^*$. 

\begin{table}[h]
\caption{Equivalence classes of $\mathcal P_{q,1}$}\label{Tb-q1}
   \renewcommand*{\arraystretch}{1.2}
    \centering
     \begin{tabular}{c|c}
     \hline
     	 representative & number \\ \hline      
         $X$ & $1$  \\ \cline{2-2}   
         & $1$  \\
         \hline
    \end{tabular}
\end{table}

\begin{table}[h]
\caption{Equivalence classes of $\mathcal P_{q,2}$}\label{Tb-q2}
   \renewcommand*{\arraystretch}{1.2}
    \centering
     \begin{tabular}{c|c|c}
     \hline
     	$q$ & representative & number \\ \hline      
        even & $X^2+X$ & $1$  \\ 
        & $X^2$ & $1$ \\ \cline{3-3}   
        & & $2$  \\
         \hline
        odd & $X^2$ & $1$  \\ \cline{3-3}   
        & & $1$  \\
         \hline 
    \end{tabular}
\end{table}

\begin{table}[h]
\caption{Equivalence classes of $\mathcal P_{q,3}$}\label{Tb-q3}
   \renewcommand*{\arraystretch}{1.2}
    \centering
     \begin{tabular}{c|c|c}
     \hline
     	$q$ & representative & number \\ \hline      
        $p=2$ & $X^3+X$ & $1$  \\ 
        & $X^3$ & $1$ \\ \cline{3-3}   
        & & $2$  \\
         \hline
        $p=3$ & $X^3+X^2$ & $1$  \\    
        & $X^3+aX,\ a\in\mathcal C_2$ & $2$  \\
        & $X^3$ & $1$ \\ \cline{3-3}
        & & $4$ \\   \hline 
        $p>3$ & $X^3+aX,\ a\in\mathcal C_2$ & $2$  \\
        & $X^3$ & $1$ \\ \cline{3-3}
        & & $3$ \\   \hline
    \end{tabular}
\end{table}

\begin{table}[h]
\caption{Equivalence classes of $\mathcal P_{q,4}$}\label{Tb-q4}
   \renewcommand*{\arraystretch}{1.2}
    \centering
     \begin{tabular}{c|c|c}
     \hline
     	$q$ & representative & number \\ \hline      
        $q\equiv 1\pmod 6$ & $X^4+a(X^2+X),\ a\in\f_q^*$ & $q-1$  \\ 
        & $X^4+aX^2,\ a\in\mathcal C_2$ & $2$ \\ 
        & $X^4+aX,\ a\in\mathcal C_3$ & $3$ \\
        & $X^4$ & $1$ \\  \cline{3-3} 
        & & $q+5$  \\  \hline
        $q\equiv 2\pmod 6$ & $X^4+X^3+aX,\ a\in\f_q$ & $q$  \\
        & $X^4+X^2+aX,\ a\in\f_q$ & $q$  \\
        & $X^4+X$ & $1$ \\
        & $X^4$ & $1$ \\ \cline{3-3}
        & & $2q+2$ \\  \hline
        $q\equiv 3,5\pmod 6$ & $X^4+a(X^2+X),\ a\in\f_q^*$ & $q-1$  \\ 
        & $X^4+aX^2,\ a\in\mathcal C_2$ & $2$ \\ 
        & $X^4+X$ & $1$ \\
        & $X^4$ & $1$ \\  \cline{3-3} 
        & & $q+3$  \\  \hline
        $q\equiv 4\pmod 6$ &  $X^4+X^3+aX,\ a\in\f_q$ & $q$  \\
        & $X^4+X^2+aX,\ a\in\f_q$ & $q$  \\
        & $X^4+aX,\ a\in\mathcal C_3$ & $3$ \\
        & $X^4$ & $1$ \\  \cline{3-3}
        & & $2q+4$ \\  \hline 
    \end{tabular}
\end{table}

\begin{table}[h]
\caption{Equivalence classes of $\mathcal P_{q,5}$}\label{Tb-q5}
   \renewcommand*{\arraystretch}{1.2}
    \centering
     \begin{tabular}{c|c|c}
     \hline
     	$q$ & representative & number \\ \hline 
	    $q\equiv 1\pmod{12}$ & $X^5+X^4+aX^2+bX,\ a,b\in\f_q$ & $q^2$ \\
	    $p=5$ & $X^5+aX^3+bX,\ a\in\mathcal C_2,\ b\in\f_q$ & $2q$ \\
	    & $X^5+aX^2,\ a\in\mathcal C_3$ & $3$ \\
	    & $X^5+aX,\ a\in\mathcal C_4$ & $4$ \\
	    & $X^5$ & $1$ \\ \cline{3-3}
	    & & $q^2+2q+8$ \\  \hline
	    $q\equiv 1\pmod{12}$ & $X^5+a(X^3+X^2)+bX,\ a\in\f_q^*,\ b\in\f_q$ & $q^2-q$ \\
	    $p\ne 5$ & $X^5+aX^3+bX,\ a\in\mathcal C_2,\ b\in\f_q$ & $2q$ \\
	    & $X^5+a(X^2+X),\ a\in\f_q^*$ & $q-1$ \\
	    & $X^5+aX^2,\ a\in\mathcal C_3$ & $3$ \\
	    & $X^5+aX,\ a\in\mathcal C_4$ & $4$ \\
	    & $X^5$ & $1$ \\ \cline{3-3}
	    & & $q^2+2q+7$ \\  \hline 
	    $q\equiv 2,8\pmod{12}$ & $X^5+a(X^3+X^2)+bX,\ a\in\f_q^*,\ b\in\f_q$ & $q^2-q$ \\
	    & $X^5+X^3+aX,\ a\in\f_q$ & $q$ \\
	    & $X^5+X^2+aX,\ a\in\f_q$ & $q$ \\
	    & $X^5+X$ & $1$ \\
	    & $X^5$ & $1$ \\ \cline{3-3}
	    & & $q^2+q+2$ \\  \hline
	    $q\equiv 3,11\pmod{12}$ & $X^5+a(X^3+X^2)+bX,\ a\in\f_q^*,\ b\in\f_q$ & $q^2-q$ \\
	    & $X^5+aX^3+bX,\ a\in\mathcal C_2,\ b\in\f_q$ & $2q$ \\
	    & $X^5+X^2+aX,\ a\in\f_q$ & $q$ \\
	    & $X^5+aX,\ a\in\mathcal C_2$ & $2$ \\
	    & $X^5$ & $1$ \\ \cline{3-3}
	    & & $q^2+2q+3$ \\  \hline
	    $q\equiv 4\pmod{12}$ & $X^5+a(X^3+X^2)+bX,\ a\in\f_q^*,\ b\in\f_q$ & $q^2-q$ \\
	    & $X^5+X^3+aX,\ a\in\f_q$ & $q$ \\
	    & $X^5+a(X^2+X),\ a\in\f_q^*$ & $q-1$ \\
	    & $X^5+aX^2,\ a\in\mathcal C_3$ & $3$ \\
	    & $X^5+X$ & $1$ \\
	    & $X^5$ & $1$ \\ \cline{3-3}
	    & & $q^2+q+4$ \\  \hline
	\end{tabular}
\end{table}

\addtocounter{table}{-1}

\begin{table}[h]
\caption{continued}\label{Tb-q5-c}
   \renewcommand*{\arraystretch}{1.2}
    \centering
     \begin{tabular}{c|c|c}
     \hline
     	$q$ & representative & number \\ \hline 
	    $q\equiv 5\pmod{12}$ & $X^5+X^4+aX^2+bX,\ a,b\in\f_q$ & $q^2$ \\
	    $p=5$ & $X^5+aX^3+bX,\ a\in\mathcal C_2,\ b\in\f_q$ & $2q$ \\
	    & $X^5+X^2$ & $1$ \\
	    & $X^5+aX,\ a\in\mathcal C_4$ & $4$ \\
	    & $X^5$ & $1$ \\ \cline{3-3}
	    & & $q^2+2q+6$ \\  \hline
	    $q\equiv 5,9\pmod{12}$ & $X^5+a(X^3+X^2)+bX,\ a\in\f_q^*,\ b\in\f_q$ & $q^2-q$ \\
	    $p\ne 5$ & $X^5+aX^3+bX,\ a\in\mathcal C_2,\ b\in\f_q$ & $2q$ \\
	    & $X^5+X^2+aX,\ a\in\f_q$ & $q$ \\
	    & $X^5+aX,\ a\in\mathcal C_4$ & $4$ \\
	    & $X^5$ & $1$ \\ \cline{3-3}
	    & & $q^2+2q+5$ \\  \hline
	    $q\equiv 7\pmod{12}$ & $X^5+a(X^3+X^2)+bX,\ a\in\f_q^*,\ b\in\f_q$ & $q^2-q$ \\
	    & $X^5+aX^3+bX,\ a\in\mathcal C_2,\ b\in\f_q$ & $2q$ \\
	    & $X^5+a(X^2+X),\ a\in\f_q^*$ & $q-1$ \\
	    & $X^5+aX^2,\ a\in\mathcal C_3$ & $3$ \\
	    & $X^5+aX,\ a\in\mathcal C_2$ & $2$ \\
	    & $X^5$ & $1$ \\ \cline{3-3}
	    & & $q^2+2q+5$ \\  \hline
	\end{tabular}
\end{table}


\newcounter{apdx}
\newenvironment{Alem}
	{\medskip
	\refstepcounter{apdx}
	\noindent
	{\bf Lemma A\theapdx.}
	\em
	}
	{\medskip
	}
	
\newcounter{aeqno}	
	
\section*{Appendix: Counting Lemmas}

For $m,n\ge 0$, let
\begin{align*}
\alpha_{m,n}\,&=|\{(f,g):f,g\in\f_q[X]\ \text{monic},\ \deg f=m,\ \deg g=n,\ \text{gcd}(f,g)=1\}|,\cr
\alpha_{n}\,&=|\{(f,g):f,g\in\f_q[X]\ \text{monic},\ \deg f<n,\ \deg g=n,\ \text{gcd}(f,g)=1\}|.
\end{align*}

\begin{Alem}\label{LA1}
We have
\refstepcounter{aeqno}
\begin{equation}\tag{A\theaeqno}\label{A1}
\alpha_{m,n}=\begin{cases}
q^n&\text{if}\ m=0,\cr
q^{m+n}(1-q^{-1})&\text{if}\ m,n>0,
\end{cases}
\end{equation}
and
\refstepcounter{aeqno}
\begin{equation}\tag{A\theaeqno}\label{A2}
\alpha_{n}=q^{2n-1},\quad n\ge 1.
\end{equation}
\end{Alem}

\begin{proof}
For \eqref{A1}, we may assume that $n-m=d\ge 0$, and it suffices to show that
\refstepcounter{aeqno}
\begin{equation}\tag{A\theaeqno}\label{A3}
\alpha_{m,m+d}=\begin{cases}
q^d&\text{if}\ m=0,\cr
q^{2m+d}(1-q^{-1})&\text{if}\ m>0.
\end{cases}
\end{equation}
The pairs $(f,g)$, where $f,g\in\f_q[X]$ are monic, $\deg f=m$ and $\deg g=m+d$,  are of the form $(hf_1,hg_1)$, where $h,f_1,g_1\in\f_q[X]$ are monic, $\deg f_1=m-\deg h$, $\deg g_1=m+d-\deg h$, and $\text{gcd}(f_1,g_1)=1$. Hence
\[
q^{2m+d}=\sum_{i\ge 0}q^i\alpha_{m-i,m+d-i},
\]
whence
\[
\sum_{m\ge 0}q^{2m+d}X^m=\Bigl(\sum_{i\ge 0}q^iX^i\Bigr)\Bigl(\sum_{j\ge 0}\alpha_{j,j+d}X^j\Bigr).
\]
Therefore,
\begin{align*}
\sum_{j\ge 0}\alpha_{j,j+d}X^j\,&=(1-qX)\sum_{m\ge 0}q^{2m+d}X^m=q^d\Bigl(\sum_{m\ge 0}q^{2m}X^m-\sum_{m\ge 0}q^{2m+1}X^{m+1}\Bigr)\cr
&=q^d\Bigl(1+\sum_{m\ge 1}(q^{2m}-q^{2m-1})X^m\Bigr)=q^d\Bigl(1+\sum_{m\ge 1}q^{2m}(1-q^{-1})X^m\Bigr),
\end{align*}
which is \eqref{A3} (with $j$ in place of $m$).

For \eqref{A2}, we have
\begin{align*}
\alpha_n\,&=\sum_{m=0}^{n-1}\alpha_{m,n}=q^n+\sum_{m=1}^{n-1}q^{m+n}(1-q^{-1})\cr
&=q^n+q^n(q-1)\sum_{m=0}^{n-2}q^m=q^n+q^n(q^{n-1}-1)\cr
&=q^{2n-1}.
\end{align*}
\end{proof}

\begin{Alem}\label{LA2}
Let $\mathcal R_{q,n}=\{f\in\f_q[X]:\deg f=n\}$. Then
\[
|\mathcal R_{q,n}|=\begin{cases}
q-1&\text{if}\ n=0,\cr
q^{2n-1}(q^2-1)&\text{if}\ n>0.
\end{cases}
\]
\end{Alem}

\begin{proof}
For $n>0$, we have
\[
|\mathcal R_{q,n}|=(q-1)(2\alpha_n+\alpha_{n,n})=(q-1)(2q^{2n-1}+q^{2n}(1-q^{-1}))=q^{2n-1}(q^2-1).
\]
\end{proof}

For $m,n\ge 0$, let
\begin{align*}
&\beta_{m,n}=\cr
&|\{(f,g):f,g\in\f_q[X]\ \text{monic},\ \deg f=m,\ \deg g=n,\ f(0)\ne0,\ \text{gcd}(f,g)=1\}|.
\end{align*}

\begin{Alem}\label{LA3}
We have
\[
\beta_{m,n}=\begin{cases}
q^{m-n-1}(q-1)\displaystyle\frac{q^{2n+1}+1}{q+1}&\text{if}\ m>n\ge 0,\vspace{0.2em}\cr
q^n&\text{if}\ m=0,\vspace{0.2em}\cr
q^{n-m}(q-1)\displaystyle\frac{q^{2m}-1}{q+1}&\text{if}\ 1\le m\le n.
\end{cases}
\]
\end{Alem}

\begin{proof}
We have
\[
\alpha_{m,n}=\beta_{m,n}+\beta_{n,m-1}.
\]
Therefore,
\refstepcounter{aeqno}
\begin{align}\tag{A\theaeqno}\label{A4}
\beta_{m,n}\,&=\alpha_{m,n}-\beta_{n,m-1}=\alpha_{m,n}-(\alpha_{n,m-1}-\beta_{m-1,n-1})\\ 
&=\alpha_{m,n}-\alpha_{m-1,n}+\beta_{m-1,n-1}=c_{m,n}+\beta_{m-1,n-1},\nonumber
\end{align}
where
\begin{align*}
c_{m,n}\,&=\alpha_{m,n}-\alpha_{m-1,n}\cr
&=\begin{cases}
q^n&\text{if}\ m=0,\cr
q^{m-1}(q-1)&\text{if}\ m>0,\ n=0,\cr
q^{n}(q-2)&\text{if}\ m=1,\ n>0,\cr
q^{m+n-2}(q-1)^2&\text{if}\ m>1,\ n>0.
\end{cases}
\end{align*}
By \eqref{A4},
\[
\beta_{m,n}=\sum_{i\ge 0}c_{m-i,n-i}.
\]
When $m>n$,
\begin{align*}
\beta_{m,n}\,&=c_{m,n}+c_{m-1,n-1}+\cdots+c_{m-n,0}\cr
&=c_{m,n}+c_{m-1,n-1}+\cdots+c_{m-n+1,1}+q^{m-n-1}(q-1)\cr
&=\sum_{i=1}^nq^{m-n+2i-2}(q-1)^2+q^{m-n-1}(q-1)\cr
&=q^{m-n}(q-1)^2\,\frac{q^{2n}-1}{q^2-1}+q^{m-n-1}(q-1)\cr
&=q^{m-n-1}(q-1)\frac{q^{2n+1}-1}{q+1}.
\end{align*}
When $m\le n$,
\begin{align*}
\beta_{m,n}\,&=c_{m,n}+c_{m-1,n-1}+\cdots+c_{0,n-m}\cr
&=c_{m,n}+c_{m-1,n-1}+\cdots+c_{1,n-m+1}+q^{n-m}.
\end{align*}
In the above, if $m=0$,
\[
\beta_{0,n}=q^n;
\]
if $m\ge 1$,
\begin{align*}
\beta_{m,n}\,&=\sum_{i=2}^mq^{n-m+2i-2}(q-1)^2+q^{n-m+1}(q-2)+q^{n-m}\cr
&=q^{n-m+2}(q-1)^2\,\frac{q^{2(m-1)}-1}{q^2-1}+q^{n-m+1}(q-2)+q^{n-m}\cr
&=q^{n-m}(q-1)\frac{q^{2m}-1}{q+1}.
\end{align*}
\end{proof}

Let $\overline{(\ )}=(\ )^q$ be the Frobenius of $\f_{q^2}$ over $\f_q$, and for $g=\sum_{i=0}^na_iX^i\in\f_{q^2}[X]$, define $\bar g=\sum_{i=0}^n\bar a_iX^i$.
For $0\ne g\in\f_{q^2}[X]$, define $\tilde g=X^{\deg g}\bar g(X^{-1})$; that is, for $g=a_mX^m+a_{m-1}X^{m-1}+\cdots+a_0\in\f_{q^2}[X]$, $a_m\ne 0$,
\[
\tilde g=\bar a_0X^m+\bar a_1X^{m-1}+\cdots+\bar a_m.
\]
Clearly, $\widetilde{g_1g_2}=\tilde g_1\tilde g_2$, $\widetilde{X^m}=1$, and $\tilde{\tilde g}=g$ if $g(0)\ne 0$. We say the $g$ is {\em self-dual} if $\tilde g=cg$ for some $c\in\f_{q^2}^*$. In this case, $(\bar a_0,\bar a_m)=c(a_m,a_0)$, which implies that $a_0/a_m\in\mu_{q+1}$ and $c=\bar a_0/a_m\in\mu_{q+1}$.

Define
\begin{align*}\label{Gamma-def}
\Lambda_i\,&=|\{g\in\f_{q^2}[X]:\text{$g$ is monic, self-dual, $\deg g=i$}\}|,\cr
\Theta_i\,&=|\{g\in\f_{q^2}[X]:\text{$g$ is monic, $\deg g=i$, $\text{gcd}(g,\tilde g)=1$}\}|,\cr
\Gamma_{i,j}\,&=|\{(g,h):\text{$g,h\in\f_{q^2}[X]$ monic, self-dual, $\text{gcd}(g,h)=1$}\}|.
\end{align*}

\begin{Alem}\label{LA4}
We have
\refstepcounter{aeqno}
\begin{equation}\tag{A\theaeqno}\label{Lambda}
\Lambda_i=\begin{cases}
1&\text{if}\ i=0,\cr
(q+1)q^{i-1}&\text{if}\ i>0,
\end{cases} 
\end{equation}
\[
\Theta_i=\frac 1{1+q^2}\bigl[(-1)^i(1+q)+q^{2i+1}(q-1)\bigr].
\]
\end{Alem}

\begin{proof}
Every monic $g\in\f_{q^2}[X]$ has a unique representation $g=g_1h$, where $h=\text{gcd}(g,\tilde g)$, which is monic and self-dual, and $g_1\in\f_{q^2}[X]$ is monic such that $\text{gcd}(g_1,\tilde g_1)=1$. Therefore,
\[
\sum_{i=0}^l\Lambda_i\Theta_{l-i}=|\{g\in\f_{q^2}[X]\ \text{monic of degree $l$}\}|=q^{2l},
\]
that is,
\refstepcounter{aeqno}
\begin{equation}\tag{A\theaeqno}\label{A5}
\Bigl(\sum_{i=0}^\infty\Lambda_iX^i\Bigr)\Bigl(\sum_{j=0}^\infty\Theta_jX^j\Bigr)=\sum_{l=0}^\infty q^{2l}X^l=\frac 1{1-q^2X}.
\end{equation}
Clearly $\Lambda_0=1$. Assume $l\ge 1$. Let $g(X)=X^l+a_{l-1}X^{l-1}+\cdots+a_0\in\f_{q^2}[X]$, so $\tilde g(X)=\bar a_0X^l+\bar a_1X^{l-1}+\cdots+1$. Then $g$ is self-dual if and only if
\refstepcounter{aeqno}
\begin{equation}\tag{A\theaeqno}\label{A6}
\begin{array}{cccccccl}
&\bar a_0&\kern-0.6em(\kern-0.6em&a_0& a_1& \dots& a_{l-1}&\kern-0.6em)\vspace{0.2em}\cr
=&&\kern-0.6em(\kern-0.6em&1& \bar a_{l-1}& \dots& \bar a_1&\kern-0.6em).
\end{array}
\end{equation}
If $l-1$ is even, to satisfy
\[
\begin{array}{ccccccccccl}
&\bar a_0&\kern-0.6em(\kern-0.6em&a_0& a_1&\dots&a_{(l-1)/2}&a_{(l+1)/2}& \dots& a_{l-1}&\kern-0.6em)\vspace{0.2em}\cr
=&&\kern-0.6em(\kern-0.6em&1& \bar a_{l-1}&\dots&\overline{a_{(l+1)/2}}&\overline{a_{(l-1)/2}}& \dots& \bar a_1&\kern-0.6em),
\end{array}
\]
we can choose $a_0\in\mu_{q+1}$, choose $a_1,\dots,a_{(l-1)/2}\in\f_{q^2}$ arbitrarily and let $a_i=\overline{a_{l-i}}/\bar a_0$ for $(l+1)/2\le i\le l-1$. Hence $\Lambda_l=(q+1)(q^2)^{(l-1)/2}=(q+1)q^{l-1}$. If $l-1$ is odd, to satisfy
\[
\begin{array}{cccccccccccl}
&\bar a_0&\kern-0.6em(\kern-0.6em&a_0& a_1&\dots&a_{l/2-1}&a_{l/2}&a_{l/2+1}& \dots& a_{l-1}&\kern-0.6em)\vspace{0.2em}\cr
=&&\kern-0.6em(\kern-0.6em&1& \bar a_{l-1}&\dots&\overline{a_{l/2+1}}&\overline{a_{l/2}}&\overline{a_{l/2-1}}& \dots& \bar a_1&\kern-0.6em),
\end{array}
\]
we can choose $a_0\in\mu_{q+1}$, choose $a_1,\dots,a_{l/2-1}\in\f_{q^2}$ arbitrarily, choose $a_{l/2}\in\f_{q^2}$ such that $\bar a_0 a_{l/2}=\overline{a_{l/2}}$ and let $a_i=\overline{a_{l-i}}/\bar a_0$ for $\l/2+1\le i\le l-1$. Since $a_0\in\mu_{q+1}$, the number of choices for $a_{l/2}$ is $q$. Thus we also have $\Lambda_l=(q+1)q(q^2)^{l/2-1}=(q+1)q^{l-1}$. Therefore,
\[
\Lambda_l=\begin{cases}
1&\text{if}\ l=0,\cr
(q+1)q^{l-1}&\text{if}\ l>0.
\end{cases}
\]
We then have
\refstepcounter{aeqno}
\begin{align}\tag{A\theaeqno}\label{A7}
\sum_{i=0}^\infty\Lambda_iX^i\,&=1+\sum_{i=1}^\infty(q+1)q^{i-1}X^i=\sum_{i=0}^\infty(q+1)q^{i-1}X^i+1-(q+1)q^{-1}\\
&=\frac{q+1}q\frac 1{1-qX}-\frac 1q=\frac{1+X}{1-qX}.\nonumber
\end{align}
By \eqref{A5} and \eqref{A7}, 
\begin{align*}
\sum_{j=0}^\infty\Theta_jX^j\,&=\frac{1-qX}{1+X}\cdot\frac 1{1-q^2X}=\frac{1+q}{1+q^2}\frac 1{1+X}+\frac{q(q-1)}{1+q^2}\frac 1{1-q^2X}\cr
&=\frac{1+q}{1+q^2}\sum_{j=0}^\infty(-1)^jX^j+\frac{q(q-1)}{1+q^2}\sum_{j=0}^\infty q^{2j}X^j\cr
&=\frac 1{1+q^2}\sum_{j=0}^\infty\bigl[(-1)^j(1+q)+q^{2j+1}(q-1)\bigr]X^j.
\end{align*}
Hence
\[
\Theta_j=\frac 1{1+q^2}\bigl[(-1)^j(1+q)+q^{2j+1}(q-1)\bigr].
\]
\end{proof}

\begin{Alem}\label{LA5}
For $i,j\ge 0$, we have
\[
\Gamma_{i,i+j}=\begin{cases}
1&\text{if}\ i=j=0,\cr
q^{j-1}(q+1)&\text{if}\ i=0,\ j>0,\vspace{0.3em}\cr
\displaystyle\frac{q(q+1)}{q^2+1}(q^{2i}-q^{2i-2}-(-1)^i2)&\text{if}\ i>0,\ j=0,\vspace{0.3em}\cr
\displaystyle\frac{q^{j-1}(q+1)(q^2-1)}{q^2+1}(q^{2i}-(-1)^i)&\text{if}\ i>0,\ j>0.
\end{cases}
\]
\end{Alem}

\begin{proof}
Each ordered pair $(f,g)$, where $f,g\in\f_{q^2}[X]$ are monic and self-dual with $\deg f=i$ and $\deg g=i+j$, has a unique representation $(f,g)=(f_1h,g_1h)$, where $f_1,g_1,h\in\f_{q^2}[X]$ are monic and self-dual and $\text{gcd}(f_1,g_1)=1$. Thus
\[
\sum_k\Lambda_k\Gamma_{i-k,i+j-k}=\Lambda_i\Lambda_{i+j}.
\]
Therefore,
\refstepcounter{aeqno}
\begin{equation}\tag{A\theaeqno}\label{A8}
\Bigl(\sum_{k\ge 0}\Lambda_kX^k\Bigr)\Bigl(\sum_{l\ge 0}\Gamma_{l,l+j}X^l\Bigr)=\sum_{i\ge 0}\Lambda_i\Lambda_{i+j}X^i.
\end{equation}
When $j=0$, by \eqref{Lambda},
\refstepcounter{aeqno}
\begin{align}\tag{A\theaeqno}\label{A9}
\sum_{i\ge 0}\Lambda_i\Lambda_iX^i\,&=1+\sum_{i\ge 1}(q+1)^2q^{2(i-1)}X^i\\
&=\sum_{i\ge 0}(q+1)^2q^{2(i-1)}X^i+1-(q+1)^2q^{-2}\cr
&=(q+1)^2q^{-2}\frac 1{1-q^2X}+1-(q+1)^2q^{-2}\cr
&=\frac{1+(2q+1)X}{1-q^2X}.\nonumber
\end{align}
Combining \eqref{A8}, \eqref{A7} and \eqref{A9} gives
\begin{align*}
\sum_{l\ge 0}\Gamma_{l,l}X^l\,&=\frac{1-qX}{1+X}\cdot\frac{1+(2q+1)X}{1-q^2X}\cr
&=\frac{2q+1}q-\frac{2q(q+1)}{q^2+1}\cdot\frac 1{1+X}+\frac{(q-1)(q+1)^2}{q(q^2+1)}\cdot\frac 1{1-q^2X}\cr
&=\frac{2q+1}q-\frac{2q(q+1)}{q^2+1}\sum_{l\ge 0}(-1)^lX^l+\frac{(q-1)(q+1)^2}{q(q^2+1)}\sum_{l\ge 0}q^{2l}X^l.
\end{align*}
Hence
\[
\Gamma_{l,l}=\begin{cases}
1&\text{if}\ l=0,\vspace{0.2em}\cr
\displaystyle\frac{q(q+1)}{q^2+1}(q^{2l}-q^{2l-2}-(-1)^l2)&\text{if}\ l>0.
\end{cases}
\]
When $j>0$, by \eqref{Lambda},
\refstepcounter{aeqno}
\begin{align}\tag{A\theaeqno}\label{A10}
\sum_{i\ge 0}\Lambda_i\Lambda_{i+j}X^i\,&=(q+1)q^{j-1}+\sum_{i\ge 1}(q+1)^2q^{2i+j-2}X^i\\
&=\sum_{i\ge 0}(q+1)^2q^{2i+j-2}X^i+(q+1)q^{j-1}-(q+1)^2q^{j-2}\cr
&=(q+1)^2q^{j-2}\frac 1{1-q^2X}-(q+1)q^{j-2}\cr
&=q^{j-1}(q+1)\frac{1+qX}{1-q^2X}.\nonumber
\end{align}
Combining \eqref{A8}, \eqref{A7} and \eqref{A10} gives
\begin{align*}
\sum_{l\ge 0}\Gamma_{l,l+j}X^l\,&=q^{j-1}(q+1)\frac{1-qX}{1+X}\cdot\frac{1+qX}{1-q^2X}\cr
&=q^{j-1}(q+1)\Bigl(1+\frac{1-q^2}{1+q^2}\cdot\frac 1{1+X}-\frac{1-q^2}{1+q^2}\cdot\frac1{1-q^2X}\Bigr)\cr
&=q^{j-1}(q+1)+\frac{q^{j-1}(q+1)(1-q^2)}{1+q^2}\Bigl(\sum_{l\ge 0}(-1)^lX^l-\sum_{l\ge 0}q^{2l}X^l\Bigr).
\end{align*}
Hence
\[
\Gamma_{l,l+j}=\begin{cases}
q^{j-1}(q+1)&\text{if}\ l=0,\vspace{0.3em}\cr
\displaystyle\frac{q^{j-1}(q+1)(q^2-1)}{q^2+1}(q^{2l}-(-1)^l)&\text{if}\ l>0.
\end{cases}
\].
\end{proof}



\end{document}